\newtheorem{Remark}{Remark}[section]
\newtheorem{Corollary}[Remark]{Corollary}
\newtheorem{Definition}[Remark]{Definition}
\newtheorem{Lemma}[Remark]{Lemma}
\newtheorem{Theorem}[Remark]{Theorem}
\newcommand{\bE}{\mathbb{E}}
\newcommand{\bH}{\mathbb{H}}
\newcommand{\bN}{\mathbb{N}}
\newcommand{\bQ}{\mathbb{Q}}
\newcommand{\bR}{\mathbb{R}}
\newcommand{\bW}{\mathbb{W}}
\newcommand{\bV}{\mathbb{V}}
\newcommand{\bZ}{\mathbb{Z}}
\newcommand{\cC}{\mathcal{C}}
\newcommand{\cH}{\mathcal{H}}
\newcommand{\cU}{\mathcal{U}}
\newcommand{\cV}{\mathcal{V}}
\newcommand{\cW}{\mathcal{W}}
\numberwithin{equation}{section} \errorcontextlines=0
\newcommand{\nt}{\noindent}
\newcommand{\sub}{\overline{\operatorname{sub}}}
\begin{document}

\title[Connected sets of solutions]{Connected sets of solutions of symmetric elliptic systems}
\subjclass[2010]{Primary: 58E09; Secondary: 35J50.}
\keywords{Equivariant degree, bifurcation theory, systems of elliptic equations.}

\author{Anna Go\l\c{e}biewska}
\address{Faculty of Mathematics and Computer Science\\
Nicolaus Copernicus University \\
PL-87-100 Toru\'{n} \\ ul. Chopina $12 \slash 18$ \\
Poland, ORCID 0000--0002--2417--9960}

\email{Anna.Golebiewska@mat.umk.pl}

\author{S{\l}awomir Rybicki}

\address{Faculty of Mathematics and Computer Science\\
Nicolaus Copernicus University \\
PL-87-100 Toru\'{n} \\ ul. Chopina $12 \slash 18$ \\
Poland, ORCID 0000--0003--3520--0515}

\email{rybicki@mat.umk.pl}

\author{Piotr Stefaniak}
\address{School of Mathematics,
West Pomeranian University of Technology \\PL-70-310 Szcze\-cin, al. Piast\'{o}w $48\slash 49$, Poland, ORCID 0000--0002--6117--2573}

\email{pstefaniak@zut.edu.pl}

\numberwithin{equation}{section}
\allowdisplaybreaks
\date{\today}

\maketitle

\begin{abstract}
The purpose of this paper is twofold. First we study bifurcations of connected sets of critical orbits of some invariant functional from a given family of critical orbits. We use techniques of equivariant bifurcation theory to obtain a Rabinowitz type alternative for symmetric gradient operators. The second aim is to apply the abstract results to studying orbits of nonconstant solutions of a nonlinear Neumann problem.
\end{abstract}

\section{Introduction}

The aim of this paper is to study the global bifurcation of solutions of a parameter dependent equation  in the presence of symmetry of some compact Lie group. More precisely, assuming the existence of the so called set of trivial solutions, we prove a necessary and sufficient conditions for  bifurcation of solutions  from this set and describe the global behaviour of a bifurcating continuum (i.e. a closed, connected set).

The most famous result concerning the global bifurcation is the classical Rabinowitz theorem, see \cite{Rabinowitz1}. Rabinowitz has considered the equation in general form 
\begin{equation}\label{eq:Rab}
T(u, \lambda)=0,
\end{equation}
where $T(u, \lambda)=u-\eta(u, \lambda),$ for $(u,\lambda) \in \bE \times \bR$, $\bE$ is a real Banach space and $\eta$ is a compact and continuous operator of the form $\eta(u, \lambda)=\lambda Lu+o(\|u\|)$ with $L$ being a compact linear operator. The set of trivial solutions has been assumed to be $\{0\} \times \bR.$ In this setting it has been proved that if $\mu$ is a characteristic value of $L$ of odd algebraic multiplicity, then there exists a continuum of solutions of problem \eqref{eq:Rab} bifurcating from  $(0,\mu)$. Moreover, this continuum is either unbounded or it meets $(0,\hat{\mu})$, where $\hat{\mu}\neq \mu$ is another characteristic value.

The above result has been generalised by many authors, considering different classes of operators appearing in the equation \eqref{eq:Rab}. In particular, a theorem of this type has been proved  in \cite{Ryb2005milano} for operators $T$ as in \eqref{eq:Rab}, but additionally gradient and with symmetries of some compact Lie group $G$.  Namely, it has been stated that if $\mu$ is a characteristic value of odd multiplicity or the eigenspace associated with this value is a nontrivial representation of $G$, then from $(0,\mu)$ bifurcates a continuum which is either unbounded or meets another characteristic value. 
In \cite{GolRyb2011} an analogous result has been given for an operator $T$ being a gradient of a strongly indefinite invariant functional.

The general form of the equation \eqref{eq:Rab} appears in the natural way when considering a wide class of differential equations, for example the elliptic systems with Dirichlet boundary conditions. Therefore, the abstract results described above could be applied to studying sets of solutions of such systems, see \cite{GolRyb2011}, \cite{Ryb2005milano}. Moreover, with an easy assumption on the system, we can obtain that $\{0\} \times \bR$ is a set of trivial solutions of some associated operator equation of the form \eqref{eq:Rab} and that there exist levels $\lambda \in \bR$ such that solutions of the form $(0, \lambda)$ are isolated in $\bE \times \{\lambda\}$.  This allows to apply classical methods, like the mentioned Rabinowitz theorem, to study bifurcations.

However, if we consider an elliptic  Neumann problem of the form
\begin{equation*}
\left\{
\begin{array}{rclcl}   -\triangle u & =& \nabla_u F(u,\lambda )   & \text{ in   } & \cU \\
                   \frac{\partial u}{\partial \nu} & =  & 0 & \text{ on    } & \partial\cU,
\end{array}\right.
\end{equation*} where the potential $F$ is assumed to be $G$-invariant, for $G$ being a compact Lie group, and some additional assumptions on $\cU$ and $F$ are satisfied (see Section \ref{sec:elsys}),
 we obtain the set of trivial solutions in a different form. More precisely, the trivial solutions form orbits of the action of the group, so we study the bifurcation from the set $G(u_0) \times \bR$ for some given orbit $G(u_0)$ of constant solutions. In this situation it can happen that the orbit $G(u_0)$ is a manifold of positive dimension. For instance if $G=SO(3)$ and the isotropy group $G_{u_0}$ of $u_0$  equals $SO(2)$, then the orbit $G(u_0)$ is $G$-homeomorphic to $G \slash G_{u_0} = S^2.$ Therefore the classical results mentioned above can not be applied in this situation.
 
That is why the theory of bifurcation from the orbit has been recently developed, see \cite{PRS}, \cite{GolKluSte}, \cite{GolSte}. In particular, in \cite{GolKluSte} we have studied global bifurcations from an orbit of solutions of a nonlinear Neumann problem defined on a ball, proving some version of a Rabinowitz type theorem. A similar result for a non-cooperative Neumann problem has been given in \cite{GolSte}. These results have been proved with the assumption that the isotropy group $G_{u_0}$ is trivial.

In the current paper we study the global bifurcation problem from an orbit $G(u_0)$ of any isotropy type. More precisely, we consider a gradient operator of the form of a completely continuous perturbation of the identity and consider its set of zeros of the form $G(u_0)\times\bR$, for $G$ being a compact Lie group. In Theorem \ref{thm:RabAlt} we formulate a sufficient condition for existence of a global branch of orbits of zeros bifurcating from $G(u_0)\times\bR$.

In Theorem \ref{thm:globbifV} we give a sufficient condition for the bifurcation in terms of eigenvalues and eigenspaces of the hessian of the functional. More precisely, we prove that if the space of fixed points of the action of the group $H$ on some direct sum of eigenspaces of the hessian is odd-dimensional, then the global bifurcation occurs, where $(H)$ is the orbit type of $G(u_0)$.
To prove this theorem we apply Theorem \ref{thm:main}, where we give a sufficient condition to distinguish the Euler characteristics of some $G$-CW-complexes.

Our second aim is to apply the abstract results to studying a nonlinear Neumann problem defined on an open set. In Theorem \ref{thm:globellip} we give sufficient conditions for global bifurcations of connected sets of orbits of nonconstant solutions, emanating from a critical orbit of constant solutions. We formulate these conditions in terms of eigenvalues of the hessian of the potential of the system as well as eigenvalues and eigenspaces of the Laplacian. We emphasise that in this theorem we do not assume anything about the orbit type of the orbit of constant solutions.

\section{Notation}

We start with recalling some notation concerning the theory of transformation groups.
Denote by $G$ a compact Lie group and by $\sub(G)$ the set of closed subgroups of $G$. We call subgroups $H,K\in \sub(G)$ conjugate if $H=gKg^{-1}$ for some $g\in G$. Denote by $(H)$ the conjugacy class of $H$ and by $\sub[G]$ the set of conjugacy classes of closed subgroups of $G$.

By a $G$-space we understand a Hausdorff space $X$ with an action of the group $G$. Since we consider Lie groups, we assume that $X$ is a manifold and the action is smooth. For a $G$-space $X$ and $x\in X$, we denote by $G_{x}$ the isotropy group of $x$, i.e. the set $\{g\in G\colon gx=x\}$ and by $G(x)$ the orbit $\{gx\colon g\in G\}$. Moreover, for a fixed $H\in\sub(G)$, by $X^H$ we denote the set of fixed points of the action of $H$, i.e. the set $\{x\in X\colon \forall_{h\in H}\, hx=x \}$ and we put $X_{(H)}=\{x\in X\colon (G_x)=(H)\}$.

Let $X$ be an $H$-space and consider the  product $G\times X$ with the $H$-action given by $(h,(g,x))\mapsto(gh^{-1}, hx)$ for $h\in H$, $(g,x)\in G\times X$. Denote by $G\times_H X$ the space of orbits of this action and note that $G\times_H X$ is a $G$-space with the $G$-action given by $(g',[g,x])\mapsto [g'g,x]$ for $g'\in G$, $[g,x]\in G\times_H X$. This space is called the induced $G$-space.

A similar construction can be done for pointed spaces. Recall that if $X$ is a pointed $H$-space, then the action on the base point of $X$ is assumed to be trivial.
If $Y$ is an $H$-space without a base point, then by $Y^+$ we denote a pointed space $Y\cup\{\ast\}$.
Suppose that $X$ is a pointed $H$-space and consider the smash product $G^+\wedge X$ with the $H$-action $(h,(g,x)) \mapsto(gh^{-1}, hx)$ for $h\in H$, $(g,x)\in G^+\wedge X$. We call the space of orbits of this action the smash product over $H$ and  denote it by $G^+\wedge_H X$. Note that $G^+\wedge_H X$ is a $G$-space with the $G$-action given by $(g',[g,x])\mapsto [g'g,x]$ for $g'\in G$, $[g,x]\in G^+\wedge_H X$.

The properties of induced $G$-spaces can be found for example in \cite{Kawakubo}, \cite{TomDieck}. For the properties of the smash product over $H$ we refer to \cite{TomDieck}.

Throughout this paper we denote by $B^k$ ($D^k$, $S^{k-1}$, respectively) the $k$-dimensional open unit ball (the $k$-dimensional closed unit ball, the $k-1$-dimensional unit sphere).

\section{The degree and the Euler characteristic}

The aim of our paper is to consider the global bifurcation phenomenon. As the main tool in this problem we use the degree theory for equivariant maps. More precisely, we work with  the degree for $G$-equvariant gradient maps defined  by Gęba in \cite{Geba} and its infinite dimensional generalisation for completely continuous perturbations of the identity, defined in \cite{Ryb2005milano}. These degrees are elements of the Euler ring $U(G)$. For the convenience of the reader we present below the definition and basic properties of this ring, as well as of the Euler characteristic $\chi_G(X)$ of a pointed $G$-CW-complex $X$. We refer to \cite{TomDieck1} and \cite{TomDieck} for more details.

\subsection{The Euler ring}

Fix a compact Lie group $G$.
 It is known that if $H\in\sub(G)$ is the isotropy group of $x$, then all the points of the orbit $G(x)$ have the isotropy groups conjugate to $H$. Therefore the class $(H)$ is sometimes called the orbit type of $G(x)$, or briefly the type of $G(x)$. In a similar way, in the case of a $G$-space $X$ with all the orbits having the same type, we call this conjugacy class the type of $X$. For example, if we consider the space $G/H$, with the action of $G$ given by $(g, g'H)\mapsto gg'H$, the type of this space is $(H)$. We can also consider the space $G/H \times B^k$, with the trivial $G$-action on $B^k$ and again obtain the space of the type $(H)$. This model space is used as the so called $k$-cell of the type $(H)$ in the definition of a $G$-CW-complex. We start with the definition of attaching a family of $k$-cells. Denote by $\sqcup$ the disjoint union.

\begin{Definition}
Let $(X,A)$ be a pair of $G$-spaces and $H_1,\ldots,H_q \in \sub(G)$. We say that $X$ is obtained from $A$ by attaching a family of $k$-cells of the type $\{(H_j)\colon j=1,\ldots,q\}$, if there exists a $G$-equivariant map
\[\varphi \colon \left(\bigsqcup\limits^q_{j=1}D^k\times G/H_j,\bigsqcup\limits^q_{j=1}S^{k-1}\times G/H_j\right) \to (X,A)\]
which maps homeomorphically $\bigsqcup\limits^q_{j=1}B^k\times G/H_j$ onto $X \setminus A$.
\end{Definition}

\begin{Definition}
Let $X$ be a $G$-space with a base point $*\in X$. If there exists a finite sequence of $G$-spaces
$X_0 \subset X_1 \subset \ldots \subset X_p = X$
such that
\begin{enumerate}
\item $X_0$ is $G$-homeomorphic with $\{*\} \sqcup \bigsqcup\limits^{q(0)}_{j=1} G/H_{j,0}$, where $H_{1,0}, \ldots , H_{q(0),0} \in \sub(G)$,
\item $X_k$ is obtained from $X_{k-1}$ by attaching a family of $k$-cells of the type $ \{(H_{j,k})\colon j=1, \ldots,q(k)\}$ for $k=1,\ldots, p$,
\end{enumerate}
then we call $X$ a pointed $G$-CW-complex.

The set $\bigcup\limits^p_{k=0}\bigcup\limits^{q(k)}_{j=1}\{(k,(H_{j,k}))\}$ is called the type of the cell decomposition of $X$. 
\end{Definition}

Denote by $[X]$ the $G$-homotopy class of a pointed $G$-CW-complex $X$. Let $\mathbf{F}$ be the free abelian group generated by the pointed $G$-homotopy types of $G$-CW-complexes and $\mathbf{N}$ the subgroup of $\mathbf{F}$ generated by all elements $[A]-[X]+[X/A]$ for pointed $G$-CW-subcomplexes $A$ of a pointed $G$-CW-complex $X$.

\begin{Definition}
Put $U(G)=\mathbf{F}/\mathbf{N}$ and let $\chi_G(X)$ be the class of $[X]$ in $U(G)$. The element $\chi_G(X)$ is said to be the $G$-equivariant Euler characteristic of a pointed $G$-CW-complex $X$.
\end{Definition}

For pointed $G$-CW-complexes $X,Y$ the actions in $U(G)$ are defined by
\begin{equation}\label{eq:actionsUG}
\left.\begin{array}{rcl}
\chi_G(X)+ \chi_G(Y)&=&\chi_G(X\vee Y),\\
\chi_G(X)\star \chi_G(Y)&=&\chi_G(X\wedge Y),
\end{array} \right.
\end{equation}
where $X\vee Y$ is the wedge sum and $X\wedge Y$ is the smash product.

\begin{Lemma}
$(U(G),+,\star)$, with the actions given by \eqref{eq:actionsUG}, is a commutative ring with unit $\mathbb{I}=\chi_G(G/G^+)$.
\end{Lemma}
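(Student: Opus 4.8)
The plan is to regard $U(G)=\mathbf{F}/\mathbf{N}$ first as the abelian group it already is, being a quotient of the free abelian group $\mathbf{F}$, and then to install the product by descending a bilinear pairing from $\mathbf{F}$; three points then require checking: that the wedge formula for $+$ agrees with the quotient group structure, that the smash formula gives a well-defined bilinear $\star$, and that the ring axioms hold with the stated unit. For the addition, I would observe that for pointed $G$-CW-complexes $X,Y$ the complex $X$ is a pointed $G$-CW-subcomplex of $X\vee Y$ with $(X\vee Y)/X$ being $G$-homeomorphic to $Y$; hence the generator of $\mathbf{N}$ attached to the pair $(X\vee Y,X)$ is $[X]-[X\vee Y]+[Y]$, so that $[X\vee Y]\equiv[X]+[Y]\pmod{\mathbf{N}}$. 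Thus $\chi_G(X\vee Y)=\chi_G(X)+\chi_G(Y)$ is precisely the addition inherited from $\mathbf{F}$, and $(U(G),+)$ is an abelian group; taking $A=X$ in the defining relation gives $[X/X]=[\{\ast\}]\in\mathbf{N}$, so $\chi_G(\{\ast\})=0$ is its neutral element, consistent with $X\vee\{\ast\}\cong X$.

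The well-definedness of $\star$ is the core of the argument. I would first define a $\bZ$-bilinear pairing $\mu\colon\mathbf{F}\times\mathbf{F}\to\mathbf{F}$ by setting $\mu([X],[Y])=[X\wedge Y]$ on generators and extending bilinearly; this is legitimate since the pointed $G$-homotopy type of $X\wedge Y$ depends only on those of $X$ and $Y$, smashing being functorial for pointed $G$-homotopies. To descend $\mu$ to $U(G)$ I must verify $\mu(\mathbf{N}\times\mathbf{F})\subseteq\mathbf{N}$ and $\mu(\mathbf{F}\times\mathbf{N})\subseteq\mathbf{N}$. Fixing a pointed $G$-CW-pair $(X,A)$ and a pointed $G$-CW-complex $Z$, the key geometric facts are that $X\wedge Z$ is naturally a pointed $G$-CW-complex, that $A\wedge Z$ is a $G$-CW-subcomplex of it, and that there is a $G$-homeomorphism $(X\wedge Z)/(A\wedge Z)\cong(X/A)\wedge Z$. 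Granting these, the image of the generator $[A]-[X]+[X/A]$ under $\mu(\cdot\,,[Z])$ equals $[A\wedge Z]-[X\wedge Z]+[(X/A)\wedge Z]$, which is the generator of $\mathbf{N}$ attached to $(X\wedge Z,A\wedge Z)$; the second inclusion follows using $Z\wedge X\cong X\wedge Z$. Hence $\mu$ induces a bilinear map $\star\colon U(G)\times U(G)\to U(G)$ coinciding with \eqref{eq:actionsUG} on generators. I expect these three $G$-CW facts about the smash product to be the main obstacle, and I would take them from the standard theory of $G$-CW-complexes in \cite{TomDieck}.

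It remains to verify the axioms. Distributivity is immediate from the bilinearity of $\star$. Commutativity and associativity hold on generators because of the $G$-homeomorphisms $X\wedge Y\cong Y\wedge X$ and $(X\wedge Y)\wedge Z\cong X\wedge(Y\wedge Z)$, whence $\chi_G(X\wedge Y)=\chi_G(Y\wedge X)$ and $\chi_G((X\wedge Y)\wedge Z)=\chi_G(X\wedge(Y\wedge Z))$, and these extend to all of $U(G)$ by bilinearity. For the unit, I would note that $G/G$ is a single $G$-fixed point, so $(G/G)^+$ is $G$-homeomorphic to $S^0$ with trivial action, and $S^0\wedge X\cong X$ for every pointed $G$-CW-complex $X$; therefore $\chi_G(G/G^+)\star\chi_G(X)=\chi_G(X)$ on generators, and bilinearity shows $\mathbb{I}=\chi_G(G/G^+)$ is a two-sided multiplicative identity. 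This establishes that $(U(G),+,\star)$ is a commutative ring with unit $\mathbb{I}$.
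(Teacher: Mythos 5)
Your proposal is correct. The paper itself offers no proof of this lemma, delegating it to tom Dieck's books, and your argument is precisely the standard one given there: addition is forced by the relation attached to the pair $(X\vee Y,X)$, the product descends from the bilinear pairing $[X]\otimes[Y]\mapsto[X\wedge Y]$ on $\mathbf{F}$ because $(X\wedge Z)/(A\wedge Z)\cong(X/A)\wedge Z$ carries generators of $\mathbf{N}$ to generators of $\mathbf{N}$, and the axioms follow from the natural homeomorphisms of the smash product. The only point worth flagging is the one you already flag: for a compact Lie group $G$ the claim that $X\wedge Z$ is again a pointed $G$-CW-complex is not automatic, since a product of cells $G/H\times G/K$ is not a cell and must itself be given a $G$-CW decomposition; this is a genuine (standard) theorem from \cite{TomDieck}, and deferring to it is appropriate.
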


We call $(U(G),+,\star)$ the Euler ring of $G$.

\begin{Lemma}
$(U(G),+)$ is a free abelian group with the basis $\chi_G(G/H^+),\ (H)\in\sub[G]$.
Moreover, if $X$ is a pointed $G$-CW-complex and $\bigcup\limits^p_{k=0}\bigcup\limits^{q(k)}_{j=1}\{(k,(H_{j,k}))\}$ is  the type of the cell decomposition of $X$, then 
\begin{equation*}
\chi_G\left(X\right)= \sum_{(H)\in\sub[G]}\nu_G\left(X,H\right)\chi_G\left(G/H^+\right),
\end{equation*}
where
$\nu_G(X,H)= \sum\limits^{p}_{k=0}(-1)^{k}\nu_G(X,H,k)$ and $\nu_G(X,H,k)$ is the number of $k$-dimensional cells of the type $(H)$.
\end{Lemma}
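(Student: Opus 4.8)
The plan is to establish the displayed formula first, since the assertion that $\{\chi_G(G/H^+)\colon (H)\in\sub[G]\}$ spans $U(G)$ is an immediate consequence of it, leaving only linear independence to be dealt with separately. The formula is proved by induction along the skeletal filtration $X_0\subset X_1\subset\cdots\subset X_p=X$ furnished by the cell decomposition, the sole input being the defining relation of $\mathbf{N}$: it says precisely that $\chi_G$ is additive on cofibre sequences, i.e. $\chi_G(X/A)=\chi_G(X)-\chi_G(A)$ for a pointed $G$-CW-subcomplex $A\subset X$.

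First I would record how $\chi_G$ behaves under smashing with spheres carrying the trivial action. Writing $S^1=D^1/S^0$ and using $G$-homotopy invariance of $\chi_G$ together with the cofibre relation gives $\chi_G(S^1)=\chi_G(D^1)-\chi_G(S^0)=0-\bI=-\bI$: here $D^1$ is $G$-contractible, so $\chi_G(D^1)=0$ (a contractible pointed complex has $\chi_G=0$, since $[\{*\}]=[X/X]\in\mathbf{N}$), while $S^0=G/G^+$ contributes the unit $\bI$. As $S^k=S^1\wedge\cdots\wedge S^1$ ($k$ factors) and $\star$ is the product induced by $\wedge$, multiplicativity yields $\chi_G(S^k)=(-1)^k\bI$, and hence $\chi_G(G/H^+\wedge S^k)=(-1)^k\chi_G(G/H^+)$ for every $(H)$.

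Next I would run the induction. The base $X_0$ is $G$-homeomorphic to $\bigvee_{j=1}^{q(0)} G/H_{j,0}^+$, so wedge-additivity gives $\chi_G(X_0)=\sum_{j}\chi_G(G/H_{j,0}^+)$. For the step, attaching the family of $k$-cells identifies the quotient skeleton as $X_k/X_{k-1}\cong\bigvee_{j=1}^{q(k)} S^k\wedge G/H_{j,k}^+$, because collapsing $S^{k-1}\times G/H_{j,k}$ inside each model cell $D^k\times G/H_{j,k}$ produces $(D^k/S^{k-1})\wedge (G/H_{j,k})^+=S^k\wedge G/H_{j,k}^+$. Combining the cofibre relation, wedge-additivity, and the sphere computation gives $\chi_G(X_k)=\chi_G(X_{k-1})+\sum_{j}(-1)^k\chi_G(G/H_{j,k}^+)$. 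Telescoping from $k=0$ to $p$ and grouping the cells of each fixed type $(H)$ yields exactly $\chi_G(X)=\sum_{(H)}\nu_G(X,H)\chi_G(G/H^+)$ with $\nu_G(X,H)=\sum_k(-1)^k\nu_G(X,H,k)$; in particular the $\chi_G(G/H^+)$ generate $(U(G),+)$.

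The remaining, and genuinely harder, point is the linear independence, equivalently the well-definedness of the coefficients $\nu_G(X,H)$ as invariants of the $G$-homotopy type. The natural attempt is to use the mark homomorphisms $m_K\colon U(G)\to\bZ$, $\chi_G(X)\mapsto\tilde\chi(X^K)$, which are ring homomorphisms since taking $K$-fixed points commutes with $\vee$, $\wedge$ and quotients while $\tilde\chi$ is additive and multiplicative. On the basis one finds $m_K(\chi_G(G/H^+))=\chi((G/H)^K)$, which vanishes unless $(K)$ is subconjugate to $(H)$, so the matrix of marks is triangular for the subconjugacy order. The obstacle — and the crux of the lemma — is that the diagonal entries equal $\chi(W(H))$, the Euler characteristic of the Weyl group $W(H)=N(H)/H$, which vanishes as soon as $W(H)$ is positive-dimensional; already for $G=\sone$ every class $(\bZ_n)$ is invisible to all the $m_K$. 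Thus the fixed-point marks alone cannot separate the basis, and to detect orbit types with positive-dimensional Weyl group one must invoke the finer structure of $U(G)$, namely the orbit-type filtration of $G$-CW-complexes showing that cells of distinct types contribute independently. This is precisely the structural input I would take from \cite{TomDieck1,TomDieck}, and it is where the real work of the independence claim resides.
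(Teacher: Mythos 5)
The paper offers no proof of this lemma: it is quoted as background material with a pointer to \cite{TomDieck1} and \cite{TomDieck}, so there is no internal argument to compare yours against. Your derivation of the displayed formula is correct and complete, and it is the standard one: the computation $\chi_G(S^k)=(-1)^k\,\bI$ from the cofibre relation $[A]-[X]+[X/A]\in\mathbf{N}$, the identification $X_k/X_{k-1}\cong\bigvee_{j}S^k\wedge G/H_{j,k}^+$, and the telescoping sum do establish the formula and hence that the classes $\chi_G(G/H^+)$, $(H)\in\sub[G]$, generate $(U(G),+)$.

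The linear independence, however, is not actually proved in your write-up. Your diagnosis of why the naive route fails is accurate -- the mark homomorphisms $\chi_G(X)\mapsto\tilde\chi(X^K)$ have a triangular matrix on the proposed basis whose diagonal entries $\chi(W(H))$ vanish whenever the Weyl group $W(H)=N(H)/H$ has positive dimension, and your $\sone$ example with $H=\bZ_n$ is apt -- but you then defer the substitute argument entirely to the references, so the freeness claim is only gestured at. The missing ingredient can be named precisely: one must show that $\nu_G(X,H)$ is itself an invariant of the pointed $G$-homotopy type, and the standard device is the compactly supported Euler characteristic of the orbit-type stratum, $\nu_G(X,H)=\chi_c\bigl((X\setminus\{*\})_{(H)}/G\bigr)$, which holds because that stratum decomposes as a disjoint union of open cells $B^k$, one for each $k$-cell of type exactly $(H)$. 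Since $(G/K)_{(H)}$ is empty for $(K)\neq(H)$ and all of $G/K$ for $(K)=(H)$, this functional takes the value $1$ on $\chi_G(G/H^+)$ and $0$ on the other generators; it kills $\mathbf{N}$ because $(X/A)\setminus\{*\}\cong X\setminus A$ and $\chi_c$ is additive. The genuinely nontrivial step, which your proposal leaves entirely to \cite{TomDieck}, is the $G$-homotopy invariance of this quantity (note that $\chi_c$ is not a homotopy invariant of arbitrary spaces, so this requires the equivariant cellular structure). Until that is supplied, the first assertion of the lemma remains unestablished.
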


\subsection{The degree for equivariant gradient maps}

Let $\bV$ be a finite dimensional orthogonal representation of $G$ and $\Omega\subset\bV$ a $G$-invariant set.
Fix a $G$-invariant map $\varphi\in C^2(\Omega,\bR)$ (i.e. satisfying $\varphi(g x)=\varphi(x)$ for every $x\in\Omega$, $g\in G$) and note that  the gradient of a $G$-invariant map is $G$-equivariant, i.e.  $\nabla\varphi(g x)=g \nabla\varphi(x)$ for every $x\in\Omega$, $g\in G$.
Suppose that $(\nabla\varphi)^{-1}(0)\cap\partial\Omega=\emptyset$. Under these assumptions, there is defined the degree for $G$-equivariant gradient maps $\nabla_{G}\textrm{-}\mathrm{deg}(\nabla\varphi, \Omega) \in U(G)$, see \cite{Geba} for the definition and properties.

In general situation, computing the degree is a difficult problem. One of the methods of obtaining some information about the degree is to use its relation with the equivariant Conley index (see for example \cite{Geba} for the definition of the index), given by the Euler characteristic. Below we recall this relation.

Assume that $v_0$ is an isolated, non-degenerate critical point of $\varphi$. Denote by $\cW$ the direct sum of the eigenspaces of $\nabla^2\varphi(v_0)$ corresponding to the negative eigenvalues and let $S^{\cW}=\cW \cup \{\infty\}$ be a one-point compactification of $\cW$. Then $S^{\cW}$ is a pointed $G$-CW-complex, with $\infty$ as a base point. It is known that the equivariant Conley index of $\{v_0\}$ of the flow generated by $-\nabla \varphi$ is the $G$-homotopy type of $S^{\cW}$, see for example Theorem 5.2 of \cite{GolKluSte}. Moreover, if $\Omega \subset \bV$ is such that $(\nabla \varphi)^{-1}(0) \cap cl(\Omega)=\{v_0\}$, then (see \cite{Geba} and Corollary 1 of \cite{GolRyb2013})
\begin{equation}\label{eq:degchar}
\nabla_{G}\textrm{-}\mathrm{deg}(\nabla\varphi, \Omega)=\chi_G(S^{\cW}).
\end{equation}

We are going to generalise the formula \eqref{eq:degchar} to the case of a critical orbit.
Assume that $G(v_0)\subset(\nabla\varphi)^{-1}(0)$ is a non-degenerate critical orbit, i.e. $\dim \ker \nabla^2 \varphi(v_0)=\dim G(v_0)$, and assume that $\Omega\subset\bV$ is such that $(\nabla\varphi)^{-1}(0)\cap  cl(\Omega)= G(v_0)$.
Denote by $T_{v_0} G(v_0)$ the tangent space to the orbit $G(v_0)$ at $v_0$ and let $\bW=(T_{v_0} G(v_0))^{\bot}$. Put $H=G_{v_0}$ and note that $\bW$ is an orthogonal $H$-representation.
As before, we denote by $\cW$ the direct sum of the eigenspaces of $\nabla^2\varphi(v_0)$ corresponding to the negative eigenvalues.

\begin{Theorem}\label{thm:degfiniterelation}
Under the above assumptions,
\begin{equation*}
\nabla_{G}\textrm{-}\mathrm{deg}\left(\nabla\varphi, \Omega\right)=\chi_G\left(G^+ \wedge_H S^{\cW}\right).
\end{equation*}
\end{Theorem}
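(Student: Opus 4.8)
The plan is to reduce the computation to the already established case of an isolated nondegenerate critical point, namely formula \eqref{eq:degchar}, by passing to a slice transverse to the orbit and then inducing the resulting $H$-equivariant data up to a $G$-equivariant statement through the twisted product $G^+\wedge_H(-)$. First I would record two linear–algebraic observations. Since $\nabla\varphi$ vanishes identically on $G(v_0)$, differentiation of the equivariance relation gives $T_{v_0}G(v_0)\subseteq\ker\nabla^2\varphi(v_0)$, and the nondegeneracy hypothesis $\dim\ker\nabla^2\varphi(v_0)=\dim G(v_0)$ forces equality, so that $\ker\nabla^2\varphi(v_0)=T_{v_0}G(v_0)$ and the restriction $\nabla^2\varphi(v_0)|_{\bW}$ is an isomorphism of $\bW=(T_{v_0}G(v_0))^{\bot}$. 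Because the eigenspaces of the symmetric operator $\nabla^2\varphi(v_0)$ belonging to nonzero eigenvalues are orthogonal to its kernel, we have $\cW\subseteq\bW$; moreover $v_0$ is fixed by $H=G_{v_0}$ and $\nabla^2\varphi(v_0)$ is $H$-equivariant, so $\cW$ is an orthogonal $H$-representation and $S^{\cW}$ is a pointed $H$-CW-complex, which is exactly what is needed for $G^+\wedge_H S^{\cW}$ to be defined.

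Next I would invoke the slice theorem. Choosing an $H$-invariant linear slice at $v_0$ with tangent space $\bW$, the equivariant tubular neighbourhood theorem provides a $G$-invariant neighbourhood $N$ of $G(v_0)$ together with a $G$-diffeomorphism $N\cong G\times_H D$, where $D$ is a small $H$-invariant disk in $\bW$ centred at $v_0$. Since $\varphi$ is $G$-invariant, its restriction to $N$ factors through the slice: under the identification above one has $\varphi([g,w])=\psi(w)$, where $\psi$ is the restriction of $\varphi$ to the slice. By the first paragraph, $\psi$ has $v_0$ as an isolated nondegenerate critical point whose negative eigenspace is $\cW$. Fixing a $G$-invariant Riemannian metric adapted to the bundle $N\cong G\times_H D$, so that the slice directions are orthogonal to the orbit directions, the negative gradient flow of $\varphi$ on $N$ leaves the $G/H$-factor invariant and acts along the fibres as the negative gradient flow of $\psi$; consequently $G(v_0)$ is an isolated invariant set of this flow inside $N$.

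Then I would identify the equivariant Conley index. On the slice, the negative gradient flow of $\psi$ has $\{v_0\}$ as an isolated nondegenerate invariant set, so its $H$-equivariant Conley index is the $H$-homotopy type of $S^{\cW}$, exactly as in the situation underlying \eqref{eq:degchar}. Because the flow on $N\cong G\times_H D$ is the flow induced from the slice flow by the twisted product, the induction property of the equivariant Conley index yields that the $G$-equivariant Conley index of $G(v_0)$ is the $G$-homotopy type of $G^+\wedge_H S^{\cW}$. Finally, combining the general identity between $\degg(\nabla\varphi,\Omega)$ and the $G$-equivariant Euler characteristic of the Conley index of the isolated invariant set $G(v_0)$ (the orbit analogue of \eqref{eq:degchar}, obtained by excision together with the index computation above) with this identification gives
\[
\degg(\nabla\varphi,\Omega)=\chi_G\left(G^+\wedge_H S^{\cW}\right),
\]
as claimed.

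I expect the main obstacle to be the third step: carefully justifying that the equivariant Conley index of the whole orbit is the induced index $G^+\wedge_H S^{\cW}$. This requires showing that the negative gradient flow in the tube genuinely decouples into the trivial flow on $G/H$ and the slice flow on $D$ — which is where the choice of an adapted $G$-invariant metric and the nondegeneracy of $\nabla^2\varphi(v_0)|_{\bW}$ enter — and then applying the induction behaviour of the equivariant homotopy Conley index under twisted products. One must also make sure the excision step is legitimate, i.e. that replacing $\Omega$ by the tube $N$ does not change the degree, which follows from the hypothesis $(\nabla\varphi)^{-1}(0)\cap cl(\Omega)=G(v_0)$.
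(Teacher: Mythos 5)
Your proposal follows essentially the same route as the paper: restrict $\varphi$ to the slice $\bW$ to get an $H$-invariant functional with $v_0$ as an isolated nondegenerate critical point whose $H$-Conley index is $S^{\cW}$, induce up to the $G$-Conley index $G^+\wedge_H S^{\cW}$ of the orbit (the paper cites Theorem 3.1 of \cite{PRS} for exactly the induction step you flag as the main obstacle), and conclude via the degree--Conley index relation. The argument is correct and matches the paper's proof.
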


\begin{proof}
Define $\psi\colon\bW\cap\Omega\to\bR$ by $\psi=\varphi_{|\bW\cap\Omega}$. It is easy to see that the functional $\psi$ is $H$-invariant and $v_0$ is an isolated critical point of $\psi$.
Moreover, from the definition of $\psi$ and since $\nabla^2\varphi_{|T_{v_0} G(v_0)}(v_0)=0$, the direct sum of the eigenspaces of $\nabla^2\psi(v_0)$ corresponding to the negative eigenvalues is equal to $\cW$. From the properties of flows induced by gradient operators, we conclude that $G(v_0)$ is an isolated invariant set in the sense of the equivariant Conley index theory for the flow induced by $-\nabla\varphi$.
Hence the Conley index of $\{v_0\}$ of the flow generated by $-\nabla\psi$
is the $H$-homotopy type of  $S^{\cW}$. 
Therefore, from Theorem 3.1 of \cite{PRS} we obtain that $G^+ \wedge_H S^{\cW}$ is the Conley index of $G(v_0)$ of the flow generated by $-\nabla\varphi$.
Applying the relation of the Conley index and the degree theory, we obtain the assertion.
\end{proof}

Our main goal is studying the global bifurcation problem. To apply the degree theory to this problem we need to distinguish degrees of different maps. From Theorem \ref{thm:degfiniterelation} we know that it can be brought to comparing the Euler characteristics of $G$-CW-complexes of the form $G^+ \wedge_H S^{\cW}$.

\begin{Theorem}\label{thm:main}
Fix orthogonal $H$-representations $\cV$, $\cW$. If $\dim \cV^H$ is odd, then $$\chi_G(G^+ \wedge_H S^{\cW\oplus\cV}) \neq \chi_G(G^+ \wedge _H S^{\cW}).$$
\end{Theorem}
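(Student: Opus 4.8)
The plan is to exploit the fact, recalled in the second Lemma above, that $(U(G),+)$ is free abelian with basis $\{\chi_G(G/L^+) : (L)\in\sub[G]\}$; hence two elements of $U(G)$ are distinct as soon as a single coordinate differs, and I will compare only the coordinate indexed by $(H)$, namely $\nu_G(\,\cdot\,,H)$. The entire argument then reduces to one computation: for an arbitrary orthogonal $H$-representation $\cU$,
\[
\nu_G\big(G^+\wedge_H S^{\cU},H\big)=(-1)^{\dim \cU^H}.
\]

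To establish this, I would fix an $H$-CW structure on $S^{\cU}$ and pass to the induced $G$-CW structure on $G^+\wedge_H S^{\cU}$, under which an $H$-cell of $S^{\cU}$ of type $(L)$ (with $L\le H$) and dimension $k$ becomes a $G$-cell of type $(L)$ and the same dimension $k$. The key point is to identify the cells of type $(H)$. A non-basepoint point $[g,w]$ has isotropy $gH_wg^{-1}$, where $H_w\le H$ is the isotropy of $w$ under the $H$-action on $S^{\cU}$; a type-$(H)$ point forces $H_w$ to be $G$-conjugate to $H$, and since $H_w\subseteq H$ with $H$ compact this yields $H_w=H$, i.e.\ $w\in(S^{\cU})^H=S^{\cU^H}$. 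Thus the $G$-cells of type $(H)$ correspond bijectively, preserving dimension, to the non-basepoint cells of $S^{\cU^H}$, and the formula of the second Lemma gives $\nu_G(G^+\wedge_H S^{\cU},H)=\sum_k(-1)^k\#\{k\text{-cells of }S^{\cU^H}\}=\tilde\chi(S^{\cU^H})=(-1)^{\dim\cU^H}$.

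With this in hand the theorem follows immediately. Applying the formula to $\cU=\cW$ and to $\cU=\cW\oplus\cV$, using $\dim(\cW\oplus\cV)^H=\dim\cW^H+\dim\cV^H$ together with the hypothesis that $\dim\cV^H$ is odd, I obtain
\[
\nu_G\big(G^+\wedge_H S^{\cW\oplus\cV},H\big)=(-1)^{\dim\cW^H+\dim\cV^H}=-(-1)^{\dim\cW^H}=-\,\nu_G\big(G^+\wedge_H S^{\cW},H\big).
\]
Since $(-1)^{\dim\cW^H}\neq0$, the $(H)$-coordinates of the two Euler characteristics differ, and therefore the elements themselves are distinct in $U(G)$.

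The main obstacle is the middle step: the careful bookkeeping of orbit types in the induced complex, in particular the claim that the only cells of type $(H)$ are those arising from the fixed sphere $S^{\cU^H}$, and that passing to reduced cell counts (the basepoint of $G^+\wedge_H S^{\cU}$ being of type $(G)$) produces exactly $\tilde\chi(S^{\cU^H})$. I emphasise that this route deliberately avoids the fixed-point ring homomorphism $\chi_G(X)\mapsto\tilde\chi(X^H)$, which would introduce an unwanted factor $\chi(N(H)/H)$ that may vanish when the Weyl group of $H$ is positive-dimensional; extracting the basis coordinate $\nu_G(\,\cdot\,,H)$ sidesteps this difficulty entirely, so that no assumption on the orbit type $(H)$ is needed.
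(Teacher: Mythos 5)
Your proof is correct and follows essentially the same route as the paper's: both arguments extract the $(H)$-th coordinate of $\chi_G$ in the free basis $\{\chi_G(G/L^+)\}$ and show it equals $(-1)^{\dim\cU^H}$ by identifying the orbit-type-$(H)$ part of $G^+\wedge_H S^{\cU}$ with data coming from the fixed sphere $S^{\cU^H}$ (the paper via $(G\times_H X)_{(H)}=G/H\times X^H$, you via cell bookkeeping in an induced $G$-CW structure). The only cosmetic difference is that your conjugacy argument ($H_w\subseteq H$ and $H_w$ $G$-conjugate to $H$ forces $H_w=H$) treats the case $H=G$ uniformly, whereas the paper handles it separately by citing an external lemma.
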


\begin{proof}
The idea of the proof is to determine in the $G$-CW-complexes $G^+ \wedge_H S^{\cW\oplus\cV}$ and $G^+ \wedge _H S^{\cW}$ cells of the type $(H)$. We are going to show that in both complexes there is only one such cell. Knowing the dimensions of these cells, we will be able to compare the $(H)$-th coordinates of their Euler characteristics.

First, observe that $G^+ \wedge_H S^{\cV}=(G\times_H \cV)\cup\{\infty\}$. Indeed,
it is clear that, for a pointed $G$-CW-complex $X$, $G^+ \wedge _H X=G \times_H X/G \times_H \{*\}$ and therefore
\begin{align*}
G^+ \wedge_H S^{\cV}=G \times_H S^{\cV}/G \times_H \{\infty\}=G \times_H (\cV\cup\{\infty\})/G \times_H \{\infty\}= 
(G \times_H \cV)\cup \{\infty\}.
\end{align*}
Analogously $G^+ \wedge_H S^{\cW\oplus \cV}=(G\times_H (\cW\oplus\cV))\cup\{\infty\}$.

If $H=G$, then, from the above equalities, $G^+ \wedge_H S^{\cV}$ is $G$-homeomorphic to $S^{\cV}$ and  $G^+ \wedge_H S^{\cW\oplus \cV}$ to $S^{\cW\oplus \cV}$, see Lemma 1.87 of \cite{Kawakubo}. Therefore the assertion follows from Lemma 3.4 of \cite{GarRyb}.

Assume that $H\neq G$. Then, using the fact that $(G\times_H X)_{(H)}= G/H \times X^H$, see Lemma 4.17 of \cite{Kawakubo}, we obtain
\[(G^+ \wedge _H S^{\cW})_{(H)}=((G\times_H \cW)\cup\{\infty\})_{(H)} =G/H \times \cW^H\]
and
\[(G^+ \wedge_H S^{\cW\oplus \cV})_{(H)}=G/H \times (\cW\oplus\cV)^H=G/H \times (\cW^H\oplus\cV^H),\]
i.e. there is only one $(\dim\cW^H)$-cell of the type (H) in the $G$-CW-complex $G^+ \wedge _H S^{\cW}$ and there is only one $(\dim\cW^H+\dim\cV^H)$-cell of the type (H) in the $G$-CW-complex $G^+ \wedge_H S^{\cW\oplus \cV}$. Therefore
\[
\chi_G(G^+ \wedge_H S^{\cW})=(-1)^{\dim\cW^H}\chi_G(G/H^+)+\sum_{(K)\in\sub[G], (K) \neq (H)} \alpha_{(K)} \chi_G(G/K^+)
\]
and
\[
\chi_G(G^+ \wedge_H S^{\cW\oplus \cV})=(-1)^{\dim\cW^H+\dim\cV^H}\chi_G(G/H^+)+\sum_{(K)\in\sub[G], (K) \neq (H)}\beta_{(K)} \chi_G(G/K^+),
\]
where $\alpha_{(K)}, \beta_{(K)} \in \bZ$, which completes the proof.
\end{proof}

\begin{Corollary}
From the definition of the Euler characteristic and the above theorem it follows that if $\dim \cV^H$ is odd, then $G^+ \wedge_H S^{\cW\oplus\cV}$ and $G^+ \wedge _H S^{\cW}$ are not $G$-homotopy equivalent.
In particular, since $G^+ \wedge_H S^{\cW\oplus\cV}$ and $G^+ \wedge _H S^{\cW}$ can be considered as the Conley indices of some critical orbits, it allows to compare these indices.
\end{Corollary}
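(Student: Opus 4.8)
The plan is to compute the $(H)$-th coordinate of the Euler characteristics in the free abelian group $(U(G),+)$ and show these coordinates differ, since the standard basis given by the $\chi_G(G/K^+)$ lets us compare elements coordinate by coordinate. By Theorem \ref{thm:degfiniterelation}'s surrounding discussion and the preceding Lemma, $\chi_G(X)=\sum_{(K)\in\sub[G]}\nu_G(X,K)\chi_G(G/K^+)$, where $\nu_G(X,K)$ is the signed count of cells of type $(K)$. So it suffices to isolate the cells of type exactly $(H)$ in each complex and track their dimensions modulo $2$.

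First I would rewrite the smash products over $H$ as one-point compactifications of induced spaces, establishing $G^+\wedge_H S^{\cW}=(G\times_H\cW)\cup\{\infty\}$ and likewise for $\cW\oplus\cV$; this follows from the general identity $G^+\wedge_H X=(G\times_H X)/(G\times_H\{*\})$ together with the fact that the base point $\infty$ is collapsed. Then I would split into cases. If $H=G$, the induced space collapses to the representation sphere itself, $G^+\wedge_G S^{\cW}\cong S^{\cW}$, and the claim reduces to a known fact comparing $\chi_G(S^{\cW\oplus\cV})$ with $\chi_G(S^{\cW})$ when $\dim\cV^G$ is odd, which I would cite. For the main case $H\neq G$, I would use the orbit-type description $(G\times_H X)_{(H)}=G/H\times X^H$ to identify the type-$(H)$ stratum: in $G^+\wedge_H S^{\cW}$ it is $G/H\times\cW^H$, and in $G^+\wedge_H S^{\cW\oplus\cV}$ it is $G/H\times(\cW^H\oplus\cV^H)$. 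Each stratum is a single open cell of type $(H)$, of dimension $\dim\cW^H$ in the first complex and $\dim\cW^H+\dim\cV^H$ in the second.

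With the cell dimensions in hand, the coordinate of $\chi_G$ along $\chi_G(G/H^+)$ is $(-1)^{\dim\cW^H}$ for the first complex and $(-1)^{\dim\cW^H+\dim\cV^H}$ for the second; since $\dim\cV^H$ is odd by hypothesis, these signs are opposite, so the two $(H)$-coordinates are $+1$ and $-1$ (up to a common sign) and in particular unequal. Because the $\chi_G(G/K^+)$ form a free basis, inequality in one coordinate forces inequality of the full Euler characteristics, which completes the argument.

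I expect the main obstacle to be justifying that the type-$(H)$ stratum really contributes exactly one cell in the $G$-CW-structure with the stated dimension, rather than several cells whose signed contributions might cancel or combine. This hinges on the cell structure of a representation sphere $S^{\cW}$ as an $H$-CW-complex: one must know that its fixed-point set $\cW^H=(S^{\cW})^H\setminus\{\infty\}$ is a single cell and that passing to $G\times_H(-)$ and taking the one-point compactification preserves this, so that no lower-dimensional type-$(H)$ cells are introduced. Once the orbit-type formula $(G\times_H X)_{(H)}=G/H\times X^H$ is invoked to pin down the stratum as a product with a linear fixed-point space (hence a single cell), the dimension count and the sign comparison are immediate.
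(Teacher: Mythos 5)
Your argument is correct and follows the paper's own route: it is essentially a re-derivation of Theorem \ref{thm:main} (the identification $G^+\wedge_H S^{\cW}=(G\times_H\cW)\cup\{\infty\}$, the case split on $H=G$, the orbit-type formula $(G\times_H X)_{(H)}=G/H\times X^H$, and the comparison of the single type-$(H)$ cell's dimension modulo $2$ in the basis $\chi_G(G/K^+)$). The only step specific to the Corollary itself --- that unequal Euler characteristics force the spaces not to be $G$-homotopy equivalent --- is left implicit in your write-up, but it is immediate since $\chi_G(X)$ is by definition the class of the $G$-homotopy type $[X]$ in $U(G)$, hence a $G$-homotopy invariant.
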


In the next section we are going to apply Theorem \ref{thm:main} to compare the degrees of some equivariant gradient maps and obtain as a consequence a global bifurcation theorem.

\subsection{Global bifurcations}\label{subsec:glob}

We start this subsection with recalling the definition and basic properties of the degree for $G$-equivariant gradient maps of the form of a completely continuous perturbation of the identity (see \cite{Ryb2005milano} for the details).
Let $\bH$ be an infinite dimensional, separable Hilbert space which is an orthogonal $G$-representation.
Assume that $\bH=cl(\bigoplus_{n=0}^{\infty} \bH_n),$ where all subspaces $\bH_n$ are disjoint finite dimensional $G$-representations and put $\bH^n = \bigoplus_{k=0}^{n} \bH_{k}$.
Denote by $\pi_n\colon\bH\to\bH$ the $G$-equivariant orthogonal projection such that $\pi_n(\bH)=\bH^n$.
We call $\{\pi_n\colon\bH \to \bH\colon n \in \bN \cup \{0\} \}$ a $G$-equivariant approximation scheme on $\bH$.

Assume that $\Omega\subset\bH$ is an open, bounded and $G$-invariant subset and $\xi\in C^1(\bH,\bR)$ is a $G$-invariant map such that $\nabla\xi\colon\bH\to \bH$ is a completely continuous, $G$-equivariant operator and  $(Id-\nabla\xi)^{-1}(0)\cap \partial\Omega=\emptyset$.
To define the degree of $Id-\nabla\xi$ on $\Omega$, consider the restrictions of this map to $\bH^n$. Since $\bH^n$ are finite dimensional $G$-representations, there is defined the Gęba's degree $\nabla_G\text{-}\deg(\pi_{n}(Id-\nabla\xi),\Omega\cap\bH^{n})$. It appears (see \cite{Ryb2005milano}) that these restrictions stabilise for $n$ sufficiently large, i.e.
there is $n_0$ such that
\[
\nabla_G\text{-}\deg(\pi_n(Id-\nabla\xi),\Omega\cap\bH^n)= \nabla_G\text{-}\deg(\pi_{n_0}(Id-\nabla\xi),\Omega\cap\bH^{n_0})
\]
for every $n\geq n_0$.
Therefore, we can define the degree of $Id-\nabla\xi$ on $\Omega$ by
\begin{equation*}
\nabla_G\text{-}\deg(Id-\nabla\xi,\Omega)= \nabla_G\text{-}\deg(\pi_{n_0}(Id-\nabla\xi),\Omega\cap\bH^{n_0}).
\end{equation*}

Using this degree we can study a global bifurcation problem. Below we give the definition of the global bifurcation from a critical orbit.

Consider a family of $G$-invariant functionals $\Phi\in C^2(\bH\times\bR,\bR)$ and suppose that there is $u_0\in\bH$ such that $\nabla_u \Phi(u_0,\lambda)=0$ for every $\lambda\in\bR$. The invariance of $\Phi$ implies that $G(u_0)\subset(\nabla_u \Phi(\cdot,\lambda))^{-1}(0)$ for every $\lambda\in\bR$. We call the elements of $G(u_0)\times\bR$ the trivial solutions of $\nabla_u \Phi(u,\lambda)=0$.

\begin{Definition}
We say that a global bifurcation of solutions of $\nabla_u \Phi(u,\lambda)=0$ occurs from the orbit $G(u_0)\times \{\lambda_0\}$, if there is a connected component $\cC(u_0,\lambda_0)$ of $cl \{(u, \lambda) \in (\bH \times \bR) \setminus (G(u_0) \times \bR)\colon \nabla_u\Phi(u, \lambda)=0\}$, containing $(u_0,\lambda_0)$, such that either $\cC(u_0,\lambda_0)\cap (G(u_0) \times (\bR\setminus \{\lambda_0\})) \neq \emptyset$ or $\cC(u_0,\lambda_0)$ is unbounded.
\end{Definition}

Note that if a global bifurcation occurs, we obtain a connected component for every $v\in G(u_0)$. In particular, if the group $G$ is connected then we obtain a connected set of nontrivial solutions bifurcating from $G(u_0)\times \{\lambda_0\}$.

Assume additionally that
$\nabla_u \Phi(u,\lambda)=u-\nabla_u \zeta (u,\lambda),$ where
 $\nabla_u\zeta\colon\bH\times\bR\to\bH$ is a completely continuous, $G$-equivariant operator.
Consider $\lambda_0\in\bR$ such that the orbit $G(u_0)$ is degenerate in $(\nabla_u \Phi(\cdot,\lambda_0))^{-1}(0)$. Suppose that there is $\varepsilon>0$ such that $G(u_0)$ is non-degenerate in $(\nabla_u \Phi(\cdot,\lambda))^{-1}(0)$ for every $\lambda\in[\lambda_0-\varepsilon,\lambda_0+\varepsilon]\setminus\{\lambda_0\}$.
Fix a $G$-invariant open set $\Omega\subset\bH$ such that $(\nabla_u\Phi(\cdot,\lambda_0\pm\varepsilon))^{-1}(0)\cap cl(\Omega)=G(u_0)$.
Below we formulate an equivariant version of the Rabinowitz global bifurcation theorem.

\begin{Theorem}\label{thm:RabAlt}
If $\nabla_G\text{-}\deg(\nabla_u\Phi(\cdot,\lambda_0-\varepsilon),\Omega)\neq \nabla_G\text{-}\deg(\nabla_u\Phi(\cdot,\lambda_0+\varepsilon),\Omega)$,
then a global bifurcation of solutions of $\nabla_u \Phi(u,\lambda)=0$ occurs from the orbit $G(u_0)\times \{\lambda_0\}$.
\end{Theorem}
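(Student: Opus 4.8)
The plan is to prove Theorem \ref{thm:RabAlt} by contradiction, using the degree as an obstruction to the non-existence of a bifurcating continuum. This is the standard Rabinowitz topological argument, adapted to the equivariant gradient setting, so the role of the hypothesis is precisely to guarantee that the $G$-equivariant gradient degree jumps as $\lambda$ crosses $\lambda_0$.

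\textbf{Setup and contradiction hypothesis.} First I would suppose that the global bifurcation does \emph{not} occur. Let $\cC = \cC(u_0,\lambda_0)$ be the connected component of $cl\{(u,\lambda)\in(\bH\times\bR)\setminus(G(u_0)\times\bR)\colon \nabla_u\Phi(u,\lambda)=0\}$ containing $(u_0,\lambda_0)$. Negating the definition means that $\cC$ is bounded and that $\cC\cap(G(u_0)\times(\bR\setminus\{\lambda_0\}))=\emptyset$; in other words, the only way $\cC$ touches the trivial set $G(u_0)\times\bR$ is at level $\lambda_0$. Because $\nabla_u\Phi$ is a completely continuous perturbation of the identity, its zero set is locally compact, and a standard compactness argument (e.g.\ the Whyburn-type separation lemma used in Rabinowitz's original proof) shows that a bounded component $\cC$ can be separated from the rest of the solution set by an open bounded $G$-invariant set $\cO\subset\bH\times\bR$ with $\cC\subset\cO$ and $\partial\cO\cap(\nabla_u\Phi)^{-1}(0)=\emptyset$. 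Here I would use $G$-invariance of $\Phi$ to ensure $\cC$, and hence $\cO$, can be chosen $G$-invariant, since $G$ acts only on the $u$-variable and commutes with everything.

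\textbf{Using the homotopy invariance of the degree.} Next I would intersect $\cO$ with the slices $\bH\times\{\lambda\}$ and set $\cO_\lambda=\{u\in\bH\colon(u,\lambda)\in\cO\}$, which is an open bounded $G$-invariant subset of $\bH$. For $\lambda\in[\lambda_0-\varepsilon,\lambda_0+\varepsilon]$ outside a neighbourhood of $\lambda_0$, the slice $\cO_\lambda$ is arranged to agree (after excision) with the fixed set $\Omega$ around $G(u_0)$, since the orbit is the only solution there. The family $\lambda\mapsto \nabla_u\Phi(\cdot,\lambda)$ restricted to $\partial\cO_\lambda$ has no zeros, so by the homotopy invariance of the $G$-equivariant gradient degree the quantity $\nabla_G\text{-}\deg(\nabla_u\Phi(\cdot,\lambda),\cO_\lambda)$ is constant in $\lambda$ along the boundary-free range. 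Because $\cC$ only touches the trivial branch at $\lambda_0$, I can choose $\cO$ so that for $\lambda=\lambda_0\pm\varepsilon$ the set $\cO_\lambda$ excises down to a neighbourhood of $G(u_0)$, giving
\begin{equation*}
\nabla_G\text{-}\deg(\nabla_u\Phi(\cdot,\lambda_0-\varepsilon),\cO_{\lambda_0-\varepsilon}) = \nabla_G\text{-}\deg(\nabla_u\Phi(\cdot,\lambda_0+\varepsilon),\cO_{\lambda_0+\varepsilon}),
\end{equation*}
and by excision each side equals the corresponding degree on $\Omega$. This forces equality of the two degrees on $\Omega$, contradicting the hypothesis of the theorem.

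\textbf{Main obstacle.} The delicate step is not the degree algebra but the topological separation: constructing the bounded $G$-invariant open set $\cO$ whose boundary avoids the full zero set and whose slices at $\lambda_0\pm\varepsilon$ reduce by excision to $\Omega$. This requires the Whyburn lemma on compact metric spaces applied to the locally compact solution set, together with care that the separating set respects the $G$-action. I would handle $G$-invariance by averaging or by replacing a candidate separating set with its $G$-orbit, using that $G$ is compact so that $G$-saturation of an open set is open and of a compact set is compact. A second subtlety is justifying the passage from the infinite-dimensional degree to a level-wise statement; here I would lean on the definition of $\nabla_G\text{-}\deg(Id-\nabla\xi,\cdot)$ via the finite-dimensional approximations $\bH^n$ and the stabilisation property recalled above, so that homotopy invariance in the infinite-dimensional degree follows from the finite-dimensional Gęba degree applied on each $\bH^n$ for $n\geq n_0$.
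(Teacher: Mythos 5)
Your proposal is the standard Rabinowitz-type argument (argue by contradiction, separate the bounded component via a Whyburn-type lemma by a $G$-invariant open set, then use homotopy invariance and excision of the equivariant gradient degree), which is precisely what the paper relies on: it gives no proof of Theorem \ref{thm:RabAlt} itself, stating only that the argument is standard in degree theory and citing \cite{Ize}, \cite{Nirenberg}, \cite{Rabinowitz1} and \cite{Brown}. So your approach coincides with the paper's intended proof.
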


The proof of this theorem is standard in the degree theory, see for instance \cite{Ize}, \cite{Nirenberg}, \cite{Rabinowitz}, \cite{Rabinowitz1}. In the case of the Leray-Schauder degree it can be found for example in \cite{Brown}.

Below we are going to formulate the global bifurcation theorem in terms of some eigenspaces of the operator $Id-\nabla^2_u \zeta.$

For $m \geq 1$ denote by $\cW_m(\lambda)$ a $G$-representation being the direct sum of the eigenspaces of $(Id-\nabla^2_u \zeta(\cdot, \lambda))_{|\bH^m}$ corresponding to the negative eigenvalues. Since $\nabla^2_u \zeta$ is a completely continuous operator, for every $\lambda$ there exists $m_0\in \bN$ such that $\cW_m(\lambda)=\cW_{m_0}(\lambda)$ for every $m \geq m_0$. Put $\cW(\lambda)=\cW_{m_0}(\lambda).$ Note that $\cW(\lambda)$ is the direct sum of the eigenspaces of $Id-\nabla^2_u \zeta(\cdot, \lambda)$ corresponding to the negative eigenvalues.

\begin{Theorem}\label{thm:globbifV}
Let the above assumptions on $\bH, \Phi, u_0, \lambda_0$ and $\varepsilon$ be satisfied.
Assume that there exists a $G$-representation $\cV(\lambda_0)$ such that $\cW(\lambda_0+\varepsilon)=\cW(\lambda_0 - \varepsilon) \oplus \cV(\lambda_0)$ or $\cW(\lambda_0-\varepsilon)=\cW(\lambda_0 + \varepsilon) \oplus \cV(\lambda_0)$ and put $H=G_{u_0}$. If $\dim \cV(\lambda_0)^H $ is odd, then a global bifurcation of solutions of $\nabla_u \Phi(u,\lambda)=0$ occurs from the orbit $G(u_0) \times \{\lambda_0\}.$
\end{Theorem}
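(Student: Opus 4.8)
The plan is to reduce Theorem \ref{thm:globbifV} to the already-established Theorem \ref{thm:RabAlt} by verifying its hypothesis, namely that the two degrees at $\lambda_0\pm\varepsilon$ differ. By the definition of the degree for completely continuous perturbations of the identity, we may pass to a finite-dimensional $G$-representation $\bH^{n_0}$ on which the degrees stabilise, and there the relevant map is a finite-dimensional equivariant gradient map. Since $G(u_0)$ is non-degenerate in $(\nabla_u\Phi(\cdot,\lambda_0\pm\varepsilon))^{-1}(0)$ and $(\nabla_u\Phi(\cdot,\lambda_0\pm\varepsilon))^{-1}(0)\cap cl(\Omega)=G(u_0)$, the hypotheses of Theorem \ref{thm:degfiniterelation} are met at each of the two parameter values. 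Hence I would write
\begin{equation*}
\nabla_G\text{-}\deg(\nabla_u\Phi(\cdot,\lambda_0\pm\varepsilon),\Omega)=\chi_G\left(G^+\wedge_H S^{\cW(\lambda_0\pm\varepsilon)}\right),
\end{equation*}
where I must check that the space $\cW$ appearing in Theorem \ref{thm:degfiniterelation} (the negative eigenspace of $\nabla^2_u\Phi$ restricted to the normal space $\bW=(T_{u_0}G(u_0))^{\bot}$) agrees with $\cW(\lambda_0\pm\varepsilon)$ as defined just before the theorem via the operator $Id-\nabla^2_u\zeta$.

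Granting that identification, the two degrees become $\chi_G(G^+\wedge_H S^{\cW(\lambda_0-\varepsilon)})$ and $\chi_G(G^+\wedge_H S^{\cW(\lambda_0+\varepsilon)})$. The hypothesis of the theorem gives a $G$-representation $\cV(\lambda_0)$ with, say, $\cW(\lambda_0+\varepsilon)=\cW(\lambda_0-\varepsilon)\oplus\cV(\lambda_0)$ (the other case being symmetric). Setting $\cW=\cW(\lambda_0-\varepsilon)$ and $\cV=\cV(\lambda_0)$, and restricting the $G$-action to the subgroup $H=G_{u_0}$ so that $\cV,\cW$ are viewed as orthogonal $H$-representations, the assumption $\dim\cV(\lambda_0)^H$ odd is precisely the hypothesis of Theorem \ref{thm:main}. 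That theorem then yields
\begin{equation*}
\chi_G\left(G^+\wedge_H S^{\cW\oplus\cV}\right)\neq\chi_G\left(G^+\wedge_H S^{\cW}\right),
\end{equation*}
so the two degrees are distinct. Theorem \ref{thm:RabAlt} now applies and delivers the global bifurcation from $G(u_0)\times\{\lambda_0\}$, completing the argument.

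I expect the main obstacle to be the bookkeeping in the first paragraph: reconciling the finite-dimensional Conley-index description of $\cW$ in Theorem \ref{thm:degfiniterelation} with the infinite-dimensional negative-eigenspace $\cW(\lambda)$ of $Id-\nabla^2_u\zeta(\cdot,\lambda)$, and verifying that the stabilisation level $n_0$ can be chosen uniformly for both $\lambda_0+\varepsilon$ and $\lambda_0-\varepsilon$ so that both degrees are computed on the same $\bH^{n_0}$. One must confirm that the negative spectral subspace of the full operator $Id-\nabla^2_u\zeta$ coincides with the negative eigenspace of the Hessian $\nabla^2_u\Phi$ restricted to the normal space $\bW$, using that the tangential directions $T_{u_0}G(u_0)$ lie in the kernel (by non-degeneracy of the orbit) and contribute no negative eigenvalues. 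Once this spectral identification is in place, the remainder is a direct concatenation of Theorems \ref{thm:degfiniterelation}, \ref{thm:main}, and \ref{thm:RabAlt}, and involves no further genuine difficulty.
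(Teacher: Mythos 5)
Your proposal is correct and follows essentially the same route as the paper: reduce via the definition of the degree to the finite-dimensional level, apply Theorem \ref{thm:degfiniterelation} to identify each degree with $\chi_G(G^+\wedge_H S^{\cW(\lambda_0\pm\varepsilon)})$, and conclude by combining Theorems \ref{thm:main} and \ref{thm:RabAlt}. The bookkeeping point you flag --- that the infinite-dimensional negative eigenspace $\cW(\lambda)$ coincides with $\cW_{n_0}(\lambda)$ at the stabilisation level $n_0$ --- is exactly the identification the paper makes and dispatches in one line.
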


\begin{proof}
From the definition  of the degree and Theorem \ref{thm:degfiniterelation} it follows that if $G(u_0)\times \{\lambda\}$ is a nondegenerate orbit, then
$$\nabla_{G}\textrm{-}\mathrm{deg}(Id- \nabla_u \zeta(\cdot, \lambda),\Omega)= \nabla_G\text{-}\deg(\pi_{n_0}(Id-\nabla_u\zeta(\cdot, \lambda)),\Omega\cap\bH^{n_0})=
\chi_G(G^+ \wedge_H S^{\cW_{n_0}(\lambda)}),$$
where $n_0$ is chosen as in the definition of the degree. It is easy to see that for such $n_0$ we have $\cW(\lambda)= \cW_{n_0}(\lambda)$. Therefore 
$$\nabla_{G}\textrm{-}\mathrm{deg}(Id- \nabla_u \zeta(\cdot, \lambda),\Omega)=\chi_G(G^+ \wedge_H S^{\cW(\lambda)}).$$
The rest of the proof is a consequence of Theorems \ref{thm:main} and \ref{thm:RabAlt}.
\end{proof}

\begin{Corollary}
Consider the case $\nabla_u \zeta(u, \lambda)=\lambda\nabla \xi(u),$ where
 $\nabla \xi$ is a completely continuous, $G$-equivariant operator. It is easy to observe that for $\lambda_0 >0$ we have $\cW(\lambda_0 +\varepsilon)=\cW(\lambda_0 - \varepsilon) \oplus \cV(\lambda_0),$ where $\cV(\lambda_0)$ is the eigenspace of $Id-\lambda \nabla^2 \xi$ corresponding to the zero eigenvalue. Analogously for $\lambda_0 <0$ we have $\cW(\lambda_0 -\varepsilon)=\cW(\lambda_0 + \varepsilon) \oplus \cV(\lambda_0).$  In both cases, from Theorem \ref{thm:globbifV} we obtain that if $\dim \cV(\lambda_0)^H$ is odd, then a global bifurcation of solutions of $\nabla_u \Phi(u,\lambda)=0$ occurs from the orbit $G(u_0) \times \{\lambda_0\}.$
Note that if the group is trivial, i.e. $G=\{e\}$, this result corresponds to a well known result of Rabinowitz, see \cite{Rabinowitz1}
\end{Corollary}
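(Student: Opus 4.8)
The plan is to reduce the Corollary entirely to a verification of the hypotheses of Theorem \ref{thm:globbifV} and then to quote that theorem. First I would record the linearisation: since here $\nabla_u\zeta(u,\lambda)=\lambda\nabla\xi(u)$, the Hessian is $\nabla^2_u\zeta(\cdot,\lambda)=\lambda\nabla^2\xi$, so the operator governing the degree is $Id-\lambda A$ with $A:=\nabla^2\xi(u_0)$. Because $\nabla\xi$ is completely continuous, $A$ is a compact self-adjoint operator, $H$-equivariant (indeed $G$-equivariant here), whose spectrum consists of isolated eigenvalues of finite multiplicity accumulating only at $0$. I would also observe that the trivial-branch condition $u_0-\lambda\nabla\xi(u_0)=0$ for all $\lambda$ forces $u_0=0$, so that $G(u_0)$ is a single point, $H=G_{u_0}=G$, and in particular the kernel of $Id-\lambda_0 A$ contains no orbit-tangent directions.

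The heart of the argument is then a sign-count for the eigenvalues $1-\lambda\mu$ of $Id-\lambda A$, where $\mu$ runs over the eigenvalues of $A$; such an eigenvalue is negative exactly when $\lambda\mu>1$ and vanishes only at $\lambda=1/\mu$. Since the values $1/\mu$ are isolated, after shrinking $\varepsilon$ and using the assumed non-degeneracy of $G(u_0)$ on $[\lambda_0-\varepsilon,\lambda_0+\varepsilon]\setminus\{\lambda_0\}$, the only eigenspace whose associated eigenvalue changes sign on this interval is $\cV(\lambda_0):=\ker(Id-\lambda_0 A)$, the eigenspace of $A$ for the eigenvalue $1/\lambda_0$. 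For $\lambda_0>0$ the map $\lambda\mapsto 1-\lambda/\lambda_0$ is decreasing, hence positive at $\lambda_0-\varepsilon$ and negative at $\lambda_0+\varepsilon$, so $\cV(\lambda_0)$ enters the negative part and $\cW(\lambda_0+\varepsilon)=\cW(\lambda_0-\varepsilon)\oplus\cV(\lambda_0)$; for $\lambda_0<0$ the same map is increasing and the roles of the endpoints swap, giving $\cW(\lambda_0-\varepsilon)=\cW(\lambda_0+\varepsilon)\oplus\cV(\lambda_0)$. In either case the splitting required by Theorem \ref{thm:globbifV} holds with this $\cV(\lambda_0)$.

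With the splitting in hand, I would simply invoke Theorem \ref{thm:globbifV}: if $\dim\cV(\lambda_0)^H$ is odd then a global bifurcation occurs from $G(u_0)\times\{\lambda_0\}$. When $G=\{e\}$ one has $\cV(\lambda_0)^H=\cV(\lambda_0)$, and the condition degenerates to oddness of the algebraic multiplicity of the crossing, recovering the classical statement of \cite{Rabinowitz1}. The only genuine obstacle is the bookkeeping in the middle step, namely showing that exactly the single eigenspace $\cV(\lambda_0)$ switches sign on the chosen interval and that it carries the expected $H$-action; this rests on the discreteness of the spectrum of the compact operator $A$ together with the equivariance inherited from the $G$-invariance of $\xi$, after which the conclusion is a direct citation of Theorem \ref{thm:globbifV}.
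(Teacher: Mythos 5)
Your argument is correct and matches the paper's intended reasoning: the Corollary rests exactly on the eigenvalue sign-count for $Id-\lambda\nabla^2\xi$ (the eigenvalue $1-\lambda\mu$ crosses zero only at $\lambda=1/\mu$, and these crossings are isolated by compactness of $\nabla^2\xi$), which yields the stated splitting of $\cW(\lambda_0\pm\varepsilon)$ by $\cV(\lambda_0)=\ker(Id-\lambda_0\nabla^2\xi)$, after which one cites Theorem \ref{thm:globbifV}. Your additional observation that the trivial-branch condition forces $u_0=0$ (hence $H=G$ and the bifurcation is from a point, as in the classical Rabinowitz setting) is sound and consistent with the paper's subsequent remark that this form of $\nabla_u\zeta$ arises for Dirichlet problems.
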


\begin{Remark}
The operator of the form $Id-\lambda \nabla \xi$ appears for example when considering elliptic systems with Dirichlet boundary conditions. For the system with Neumann boundary conditions the operator $\nabla_u \zeta$ is of the form $\nabla_u \zeta_1 - \lambda \nabla_u \zeta_2$, where operators $\nabla_u \zeta_1$ and $\nabla_u \zeta_2$ are completely continuous. Note that also in this case, if we denote by $\cV(\lambda_0)$ the eigenspace of $Id - \nabla^2_u \zeta(\cdot, \lambda_0)$ corresponding to the zero eigenvalue, from Theorem \ref{thm:globbifV} we obtain that if $\dim \cV(\lambda_0)^H$ is odd, then a global bifurcation of solutions of $\nabla_u \Phi(u,\lambda)=0$ occurs from the orbit $G(u_0) \times \{\lambda_0\}$, see Theorem \ref{thm:globellip}.
\end{Remark}

\section{Elliptic system}\label{sec:elsys}

Fix a compact Lie group $G$ and consider the system
\begin{equation}\label{eq:system}
\left\{
\begin{array}{rclcl}   -\triangle u & =& \nabla_u F(u,\lambda )   & \text{ in   } & \cU \\
                   \frac{\partial u}{\partial \nu} & =  & 0 & \text{ on    } & \partial\cU,
\end{array}\right.
\end{equation}
where $\cU\subset\bR^N$ is an open, bounded subset  with the piecewise $C^1$ boundary. We assume that
\begin{enumerate}
\item[(a1)] $F\in C^2(\bR^p\times\bR,\bR)$,
\item[(a2)] $u_0$ is a critical point of $F$ for all $\lambda \in \bR$ and there exists a symmetric matrix $A$ such that $\nabla_u^2 F(u_0,\lambda)=\lambda A$,
\item[(a3)] there exist $C>0$ and $1\leq s < (N+2)(N-2)^{-1}$ such that $|\nabla^2_u F(u,\lambda)|\leq C(1+|u|^{s-1})$ (if $N=2$, we assume that $s\in[1,+\infty))$,
\item[(a4)] $\bR^p$ is an orthogonal $G$-representation and $F$ is $G$-invariant with respect to the first variable, i.e.  $F(g u,\lambda) =F(u,\lambda)$ for every $g \in G$, $u\in\bR^p$, $\lambda\in\bR$.
\end{enumerate}
From (a2) and (a4) it follows that $\nabla_u F(g u_0,\lambda)=0$ for every $g\in G$, $\lambda\in\bR$, i.e. $G(u_0)\subset(\nabla_u F(\cdot,\lambda))^{-1}(0)$ for every $\lambda\in\bR$. We additionally assume:
\begin{enumerate}
\item[(a5)] the orbit $G(u_0)$ is non-degenerate for every $\lambda\in\bR$, i.e.  $\dim \ker \nabla^2 F (u_0,\lambda) = \dim G (u_0)$.
\end{enumerate}

Let $H^1(\cU)$ denote the standard Sobolev space with the inner product
\begin{equation*}
\langle \eta,\xi\rangle_{H^1(\cU)}=\int_{\cU}(\nabla \eta(x), \nabla \xi(x)) +\eta(x) \cdot \xi(x)\, dx.
\end{equation*}
Consider the space $\bH=\bigoplus_{i=1}^p H^1(\cU)$ with the inner product given by
\begin{equation*}%\label{eq:iloczyn}
\langle u, v\rangle_{\bH} = \sum_{i=1}^p \langle u_i,v_i\rangle_{H^1(\cU)}.
\end{equation*}
Note that $\bH$ is an orthogonal $G$-representation, with the action $(g,u)(x)\mapsto g u(x)$ for $g\in G$, $u\in \bH$, $x\in \cU$.

It is known that weak solutions of the system \eqref{eq:system} are in one-to-one correspondence with critical points (with respect to $u$) of the functional $\Phi \colon \bH \times \bR \to \bR$ given by
\begin{equation}\label{eq:Phi}
\Phi(u,\lambda)=\frac{1}{2}\int_{\cU}\sum^p_{i=1}(|\nabla u_i(x)|^2)\, dx-\int_{\cU}F(u(x),\lambda)\, dx.
\end{equation}
From the assumption (a4) it follows that $\Phi$ is $G$-invariant.

\begin{Remark}
Note that the assumption (a2) implies that there exists $F_0\in C^2(\bR^p\times\bR,\bR)$ such that  $F(u,\lambda) = \frac{\lambda}{2} (Au, u)-\lambda (A u_0,u) + F_0(u-u_0,\lambda)$ and for every $\lambda\in\bR$ there hold $\nabla_u F_0(0,\lambda)=0$ and $\nabla^2_u F_0(0,\lambda)=0$.
\end{Remark}

Denote by $\tilde{u}_0\in\bH$ the constant function $\tilde{u}_0\equiv u_0$. Below we collect some standard properties of $\nabla_u \Phi$, see for example \cite{GolSte}.

\begin{Lemma}\label{lem:postacPhi}
Under the assumptions (a1)--(a3):
\begin{equation*}
\left.\begin{array}{ll}
\nabla_u\Phi(u,\lambda)=u-\tilde{u}_0-L_{\lambda A}(u-\tilde{u}_0)-\nabla_u\eta_0(u-\tilde{u}_0,\lambda),
\end{array}\right.
\end{equation*}
where
\begin{enumerate}
\item $\displaystyle{\langle L_{\lambda A} u, v \rangle_{\bH} = \int_{\cU} (u(x), v(x)) + (\lambda A u(x), v(x))\, dx}$ for all $v\in \bH$,
\item $L_{\lambda A}$ is a self-adjoint, bounded, completely continuous operator,
\item $\eta_0\colon \bH\times\bR\to \bR$ is defined by $\displaystyle{\eta_0(u,\lambda) =\int_{\cU} F_0(u(x),\lambda)\,dx}$,
\item $\nabla_u\eta_0\colon\bH\times\bR\to\bH$ is a completely continuous operator such that $\nabla_u\eta_0(0,\lambda)=0,\ \nabla^2_u\eta_0(0,\lambda)=0$ for every $\lambda\in\bR$.
\end{enumerate}
\end{Lemma}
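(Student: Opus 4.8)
The plan is to compute the $u$-gradient directly from the definition of $\Phi$ by evaluating the directional derivative $D_u\Phi(u,\lambda)[v]$ and reading off its representative in $\bH$ through the Riesz representation theorem. Writing $w=u-\tilde u_0$ and using that $\tilde u_0$ is constant, so that $\nabla w=\nabla u$, differentiation of \eqref{eq:Phi} gives, for every $v\in\bH$,
\begin{equation*}
D_u\Phi(u,\lambda)[v]=\int_{\cU}(\nabla w(x),\nabla v(x))\,dx-\int_{\cU}(\nabla_u F(u(x),\lambda),v(x))\,dx.
\end{equation*}
I would then insert the decomposition $\nabla_u F(u,\lambda)=\lambda A(u-u_0)+\nabla_u F_0(u-u_0,\lambda)$ coming from the Remark and rewrite the Dirichlet term via $\int_{\cU}(\nabla w,\nabla v)\,dx=\langle w,v\rangle_{\bH}-\int_{\cU}(w(x),v(x))\,dx$, arriving at
\begin{equation*}
D_u\Phi(u,\lambda)[v]=\langle w,v\rangle_{\bH}-\int_{\cU}\big[(w,v)+(\lambda Aw,v)\big]\,dx-\int_{\cU}(\nabla_u F_0(w,\lambda),v)\,dx.
\end{equation*}

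Next I would introduce the two operators by Riesz representation. For fixed $w$ the functional $v\mapsto\int_{\cU}(w,v)+(\lambda Aw,v)\,dx$ is bounded on $\bH$, by the Cauchy--Schwarz inequality and the continuous embedding $\bH\hookrightarrow L^2$, hence represented by a unique element $L_{\lambda A}w\in\bH$; this is property (1), and it manifestly defines a bounded linear operator. Likewise $\eta_0(u,\lambda)=\int_{\cU}F_0(u(x),\lambda)\,dx$ has $u$-gradient represented by $v\mapsto\int_{\cU}(\nabla_u F_0(w,\lambda),v)\,dx$, which gives (3). Substituting these identifications yields $D_u\Phi(u,\lambda)[v]=\langle w-L_{\lambda A}w-\nabla_u\eta_0(w,\lambda),v\rangle_{\bH}$ for all $v\in\bH$, and since $w=u-\tilde u_0$ this is exactly the asserted formula for $\nabla_u\Phi$.

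There remain properties (2) and (4). Self-adjointness of $L_{\lambda A}$ follows at once from the symmetry of $A$, as $(\lambda Aw,v)=(w,\lambda Av)$ pointwise. For complete continuity I would use the Rellich--Kondrachov theorem, which applies because $\cU$ is bounded with piecewise $C^1$ boundary and gives compactness of the embedding $\bH\hookrightarrow L^2$. Testing with $L_{\lambda A}z$ itself yields
\begin{equation*}
\|L_{\lambda A}z\|_{\bH}^2=\int_{\cU}(z,L_{\lambda A}z)+(\lambda Az,L_{\lambda A}z)\,dx\le C\|z\|_{L^2}\,\|L_{\lambda A}z\|_{\bH},
\end{equation*}
so that $\|L_{\lambda A}z\|_{\bH}\le C\|z\|_{L^2}$. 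Consequently, if $z_n$ is bounded in $\bH$, then an $L^2$-convergent subsequence extracted by Rellich--Kondrachov is mapped by $L_{\lambda A}$ to an $\bH$-Cauchy sequence, whence $L_{\lambda A}$ is completely continuous. The vanishing conditions $\nabla_u\eta_0(0,\lambda)=0$ and $\nabla^2_u\eta_0(0,\lambda)=0$ are inherited directly from $\nabla_u F_0(0,\lambda)=0$ and $\nabla^2_u F_0(0,\lambda)=0$ recorded in the Remark.

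The genuinely nontrivial point, and the step I expect to be the main obstacle, is the complete continuity of $\nabla_u\eta_0$. Here the growth assumption (a3) is essential: it bounds $\nabla_u F_0$ at a subcritical rate, so that the associated Nemytskii (superposition) operator is well defined and continuous from the relevant Lebesgue space into its dual, and precomposing with the compact Sobolev embedding $H^1(\cU)\hookrightarrow L^q(\cU)$ for a subcritical exponent $q$ (admissible precisely because $s<(N+2)(N-2)^{-1}$) together with postcomposing with its adjoint produces a completely continuous gradient. This is the standard argument for such elliptic functionals; rather than reproduce the Nemytskii-operator estimates in full, I would cite the analogous computation, for instance the one in \cite{GolSte}.
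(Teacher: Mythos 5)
Your argument is correct and follows the standard route that the paper itself defers to (the lemma is stated without proof, with a reference to \cite{GolSte}): differentiate \eqref{eq:Phi}, substitute the splitting $\nabla_u F(u,\lambda)=\lambda A(u-u_0)+\nabla_u F_0(u-u_0,\lambda)$ from the Remark, identify $L_{\lambda A}$ and $\nabla_u\eta_0$ via Riesz representation, and obtain complete continuity from the Rellich--Kondrachov theorem together with the subcritical growth condition (a3) controlling the Nemytskii operator. All the individual steps, including the estimate $\|L_{\lambda A}z\|_{\bH}\le C\|z\|_{L^2}$ and the inheritance of the vanishing conditions from $F_0$, check out.
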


Let us denote  by $\sigma(-\Delta; \cU) = \{ 0=\beta_1 < \beta_2 < \ldots < \beta_k < \ldots\}$ the set of distinct eigenvalues of the Laplace operator (with Neumann boundary conditions) on $\cU$. Write $\bV_{-\Delta}(\beta_k)$ for the eigenspace of $-\Delta$ corresponding to  $\beta_k \in \sigma(-\Delta; \cU)$.
Let us denote by $\bH_k$ the space $\bigoplus_{i=1}^p \bV_{-\Delta}(\beta_k).$ By the spectral properties of self-adjoint, completely continuous operators, it follows that $H^1(\cU) = cl ( \bigoplus_{k=1}^{\infty} \bV_{-\Delta} (\beta_k)).$ Let $\bH^0 = \{0\}$ and $\pi_n \colon \bH \rightarrow \bH$ be a natural $G$-equivariant projection such that $ \pi_n(\bH) = \bH^n$ for $n \in \bN\cup\{0\}$, where $\bH^n =  \bigoplus_{k=1}^n \bigoplus_{j=1}^{p} \bV_{-\Delta} (\beta_k)$. 
This defines a $G$-equivariant approximation scheme $\{\pi_n\colon\bH \rightarrow \bH\colon n \in \bN \cup \{0\} \}$ and therefore the assumptions on $\bH$ of Subsection \ref{subsec:glob} are satisfied.
In particular, for every $u\in \bH$ there exists a unique sequence $\{u^k\}$ such that $u^k\in\bH_k$ and $u=\sum_{k=1}^{\infty}u^k$.

Put $\bW=(T_{u_0} G (u_0))^{\bot}\subset\bR^p$, $H=G_{u_0}$. Let $p_0=\dim G(u_0)$, $p_1=\dim \bW^H$ and $p_2=\dim(\bW^H)^{\bot}$.
It is known (see \cite{Geba}) that
\begin{equation*}
\left.\begin{array}{rccc}
& T_{u_0} G(u_0)& &T_{u_0} G(u_0) \\
&\oplus& &\oplus\\
A\colon &\bW^H&\to&\bW^H\\
&\oplus& &\oplus\\
&(\bW^H)^{\bot}& &(\bW^H)^{\bot}
\end{array}\right.
\end{equation*}
has the following form
\begin{equation}\label{eq:matrixA}
A=\left[\begin{array}{ccc}
0&0&0\\
0&B(u_0)&0\\
0&0&C(u_0)
\end{array}\right],
\end{equation}
where $B(u_0)$ is of dimension $p_1\times p_1$  and $C(u_0)$ of $p_2\times p_2$.

Let us consider an orthonormal basis of $\bR^p$ consisting of eigenvectors of the matrix $A$. More precisely, denote by $f_1,\ldots, f_{p_0}$ the eigenvectors corresponding to 0, by $f_{p_0+1},\ldots,f_ {p_0+p_1}$ the eigenvectors corresponding to (not necessarily distinct) eigenvalues $b_{p_0+1},\ldots, b_{p_0+p_1}$ of $B(u_0)$ and by $f_{p_0+p_1+1},\ldots,f_ {p}$ the eigenvectors corresponding to eigenvalues $ c_{p_0+p_1+1},\ldots, c_{p}$ of $C(u_0)$.

Let $\tau_j\colon\bH\to H^1(\cU)$ be a projection such that $\tau_j(u)(x)=(u(x),f_j)$, $j=1,\ldots, p$.
Clearly, if $u^k \in \bH_k,$ then $\tau_j(u^k) \in \bV_{-\Delta}(\beta_k)$. Moreover, every $u\in\bH$ can be represented by $\displaystyle{u=\sum_{k=1}^{\infty}\sum_{j=1}^p \tau_j(u^k)\cdot f_j.}$
Denote
\begin{align*}
\cH_{0}&=\big\{u \in \bH \colon u = \sum\limits_{k=1}^{\infty}\sum\limits_{j=1}^{p_0} \tau_j(u^k) \cdot f_j\big\},\\
\cH_{1}&=\big\{u \in \bH \colon u = \sum\limits_{k=1}^{\infty}\sum\limits_{j=p_0+1}^{p_0+p_1} \tau_j(u^k) \cdot f_j\big\},\\ 
\cH_{2}&=\big\{u \in \bH \colon u = \sum\limits_{k=1}^{\infty}\sum\limits_{j=p_0+p_1+1}^{p} \tau_j(u^k) \cdot f_j\big\}.
\end{align*}
Note that for $u=(\mathbf{u}_0,\mathbf{u}_1,\mathbf{u}_2)\in\cH_{0}\oplus\cH_{1}\oplus\cH_{2}=\bH$ we have \begin{align*}L_{\lambda A}u=(L_{0}\mathbf{u}_0,L_{\lambda B(u_0)}\mathbf{u}_1,L_{\lambda C(u_0)}\mathbf{u}_2),
\end{align*}
where
\begin{align*}
\langle L_{0}\mathbf{u}_0,v_0\rangle_{\cH_{0}}&=\int_{\cU} (\mathbf{u}_0(x),\mathbf v_0(x))\, dx, \\
\langle L_{\lambda B(u_0)}\mathbf{u}_1,v_1\rangle_{\cH_{1}}&=\int_{\cU} (\mathbf{u}_1(x)+\lambda B(u_0)\mathbf{u}_1(x),\mathbf v_1(x))\, dx,\\
\langle L_{\lambda C(u_0)}\mathbf{u}_2,v_2\rangle_{\cH_{2}}&=\int_{\cU} (\mathbf{u}_2(x)+\lambda C(u_0)\mathbf{u}_2(x),\mathbf v_2(x))\, dx
\end{align*}
for all $\mathbf v_0\in \cH_{0}$, $\mathbf v_1\in \cH_{1}$, $\mathbf v_2\in \cH_{2}$.

In the lemma below we characterise the operators $L_{0}$, $L_{\lambda B(u_0)}$ and $L_{\lambda C(u_0)}$. The proof is analogous to the one of Lemma 3.2 of \cite{GolKlu}.

\begin{Lemma}\label{lem:operatorLlambdaA}
For every $u=(\mathbf{u}_0, \mathbf{u}_1, \mathbf{u}_2)\in\cH_{0}\oplus \cH_{1} \oplus \cH_{2}$
\begin{align*}
L_{0} \mathbf{u}_0=&\sum\limits_{k=1}^{\infty}\sum\limits_{j=1}^{p_0} \frac{1}{1+\beta_k} \tau_j(u^k) \cdot f_j,\\
L_{\lambda B(u_0)} \mathbf{u}_1=&\sum\limits_{k=1}^{\infty}\sum\limits_{j=p_0+1}^{p_0+p_1} \frac{1+\lambda b_j}{1+\beta_k} \tau_j(u^k) \cdot f_{j}, \\
L_{\lambda C(u_0)} \mathbf{u}_2=&\sum\limits_{k=1}^{\infty}\sum\limits_{j=p_0+p_1+1}^{p} \frac{1+\lambda c_j}{1+\beta_k} \tau_j(u^k) \cdot f_{j},
\end{align*}
where $u=\sum\limits_{k=1}^{\infty}u^k$.
\end{Lemma}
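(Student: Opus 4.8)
The plan is to check the three explicit formulas directly against the variational identities that \emph{define} $L_{0}$, $L_{\lambda B(u_0)}$ and $L_{\lambda C(u_0)}$ (the three displayed bilinear relations stated just before the lemma). Each of these operators is uniquely determined by its defining relation via the Riesz representation theorem, so it suffices to insert the candidate series into the corresponding pairing and verify that the identity holds for every test element of $\cH_{0}$, $\cH_{1}$, $\cH_{2}$. I would treat the three blocks in parallel, since they differ only in which eigenvalues of $A$ enter.

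The first key step is a spectral identity relating the two inner products on a single Laplacian eigenspace. For $\phi,\psi\in\bV_{-\Delta}(\beta_k)$ one has $-\Delta\phi=\beta_k\phi$, so integration by parts together with the Neumann boundary condition yields $\int_{\cU}(\nabla\phi,\nabla\psi)\,dx=\beta_k\int_{\cU}\phi\psi\,dx$, and hence
\[
\langle\phi,\psi\rangle_{H^1(\cU)}=(1+\beta_k)\int_{\cU}\phi\psi\,dx .
\]
Combined with the fact that $\bV_{-\Delta}(\beta_k)$ and $\bV_{-\Delta}(\beta_l)$ are orthogonal in both $H^1(\cU)$ and $L^2(\cU)$ for $k\neq l$, this is the mechanism that converts the factor $\tfrac{1}{1+\beta_k}$ appearing in the candidate formulas into the correct coefficient after pairing.

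The second ingredient is that $\{f_1,\ldots,f_p\}$ is orthonormal and diagonalizes the symmetric matrix $A$. Orthonormality decouples the inner products coordinatewise, $\langle u,v\rangle_{\bH}=\sum_{j=1}^{p}\langle\tau_j(u),\tau_j(v)\rangle_{H^1(\cU)}$ and $\int_{\cU}(u(x),v(x))\,dx=\sum_{j=1}^{p}\int_{\cU}\tau_j(u)\tau_j(v)\,dx$, because the $H^1$ norm on $\bH$ is the sum over components and is invariant under an orthogonal change of the $\bR^p$ coordinates; diagonalization gives $\tau_j(Au)=b_j\,\tau_j(u)$ on the middle block, $\tau_j(Au)=c_j\,\tau_j(u)$ on the last, and $\tau_j(Au)=0$ on the first. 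With these two ingredients the verification is mechanical: taking the middle block as representative, I would substitute the candidate $L_{\lambda B(u_0)}\mathbf{u}_1=\sum_{k}\sum_{j=p_0+1}^{p_0+p_1}\tfrac{1+\lambda b_j}{1+\beta_k}\tau_j(u^k)\cdot f_j$ into $\langle L_{\lambda B(u_0)}\mathbf{u}_1,v_1\rangle_{\cH_1}$, expand in the $\{f_j\}$-basis, discard the off-diagonal ($k\neq l$) terms by eigenspace orthogonality, apply the spectral identity to replace each surviving $H^1$ pairing by $(1+\beta_k)$ times its $L^2$ counterpart, and cancel the factor $(1+\beta_k)$. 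What remains is $\sum_{j}(1+\lambda b_j)\int_{\cU}\tau_j(\mathbf{u}_1)\tau_j(v_1)\,dx$, which by the diagonalization of $A$ is exactly $\int_{\cU}(\mathbf{u}_1+\lambda B(u_0)\mathbf{u}_1,\mathbf{v}_1)\,dx$, the right-hand side of the defining relation; the cases of $L_{0}$ and $L_{\lambda C(u_0)}$ are identical with $b_j$ replaced by $0$ and by $c_j$.

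The computation is routine, so I expect no genuine obstacle. The only points requiring care are justifying the interchange of the infinite $k$-summation with the inner product, which is legitimate since the coefficients $\tfrac{1+\lambda b_j}{1+\beta_k}$ are bounded and the eigenspace decomposition is orthogonal so the partial sums converge in $\bH$, and the bookkeeping of the three index ranges $j$. As noted in the text, this is structurally the same argument as in Lemma 3.2 of \cite{GolKlu}.
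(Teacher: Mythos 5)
Your proof is correct and follows essentially the same route as the paper, which itself only refers to Lemma 3.2 of \cite{GolKlu}: the argument there is precisely this direct verification against the defining bilinear identities, using the spectral relation $\langle\phi,\psi\rangle_{H^1(\cU)}=(1+\beta_k)\int_{\cU}\phi\psi\,dx$ on $\bV_{-\Delta}(\beta_k)$, the mutual orthogonality of the Laplacian eigenspaces in both inner products, and the diagonalization of $A$ in the orthonormal basis $\{f_j\}$. No gaps.
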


As an easy consequence of Lemma \ref{lem:operatorLlambdaA} we obtain:

\begin{Corollary}\label{cor:spektrumLA}
The spectrum of $Id-L_{\lambda A}$ is given by:
\[
\sigma(Id-L_{\lambda A})=\left\{
 \frac{\beta_k-\lambda\alpha_j}{1+\beta_k}\colon \alpha_j \in\sigma(A), \beta_k\in \sigma(-\Delta; \cU)
\right\}.
\]
\end{Corollary}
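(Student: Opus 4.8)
The plan is to read the spectrum directly off the diagonalisation already furnished by Lemma \ref{lem:operatorLlambdaA}, so that essentially all the work has been done. The three formulas there share a common shape once one recalls how the index $j$ encodes the eigenvalue of $A$ attached to $f_j$: for $1\le j\le p_0$ the vector $f_j$ corresponds to the eigenvalue $0$ of $A$, for $p_0+1\le j\le p_0+p_1$ to $b_j$, and for $p_0+p_1+1\le j\le p$ to $c_j$. Writing $\alpha_j$ for this eigenvalue in each of the three cases, the three lines of Lemma \ref{lem:operatorLlambdaA} collapse into the single eigenrelation
\[
L_{\lambda A}\bigl(\tau_j(u^k)\cdot f_j\bigr)=\frac{1+\lambda\alpha_j}{1+\beta_k}\,\tau_j(u^k)\cdot f_j ,
\]
valid for every $k$ and $j$, where $\tau_j(u^k)\in\bV_{-\Delta}(\beta_k)$.

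First I would record that, as noted just before Lemma \ref{lem:operatorLlambdaA}, every $u\in\bH$ admits the expansion $u=\sum_{k}\sum_{j}\tau_j(u^k)\cdot f_j$, so the vectors $\tau_j(u^k)\cdot f_j$ span a dense subspace of $\bH$. Since $L_{\lambda A}$ is self-adjoint and completely continuous, its spectrum is the closure of its point spectrum, and by the displayed eigenrelation the eigenvalues are exactly the numbers $\tfrac{1+\lambda\alpha_j}{1+\beta_k}$ with $\alpha_j\in\sigma(A)$ and $\beta_k\in\sigma(-\Delta;\cU)$. Applying $Id-(\cdot)$ to the same eigenvectors then gives the eigenvalue of $Id-L_{\lambda A}$ on $\tau_j(u^k)\cdot f_j$ as
\[
1-\frac{1+\lambda\alpha_j}{1+\beta_k}=\frac{(1+\beta_k)-(1+\lambda\alpha_j)}{1+\beta_k}=\frac{\beta_k-\lambda\alpha_j}{1+\beta_k},
\]
which is precisely the set asserted in the statement.

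This is the entire content of the corollary, so I do not expect any genuine obstacle; the argument is a bookkeeping step on top of Lemma \ref{lem:operatorLlambdaA}. The only point that deserves a word is the behaviour at the accumulation value: since $\beta_k\to\infty$, the eigenvalues $\tfrac{\beta_k-\lambda\alpha_j}{1+\beta_k}$ cluster at $1$, consistent with $L_{\lambda A}$ being compact (its eigenvalues accumulate only at $0$). Thus $1$ belongs to the essential spectrum of $Id-L_{\lambda A}$ and lies in the closure of the exhibited set, so that the equality is to be read as identifying the spectrum with the closure of the displayed family of point eigenvalues.
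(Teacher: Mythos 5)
Your proposal is correct and is exactly the argument the paper intends: the corollary is stated as an immediate consequence of Lemma \ref{lem:operatorLlambdaA}, and you read the eigenvalues $\frac{\beta_k-\lambda\alpha_j}{1+\beta_k}$ off the diagonalisation in precisely that way. Your closing remark that the spectrum is strictly speaking the closure of the displayed set of point eigenvalues (with $1$ as the accumulation point) is a valid refinement of the statement, not a defect of your proof.
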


From Lemma \ref{lem:postacPhi} it follows that $\tilde{u}_0$ is a critical point of $\Phi$ for every $\lambda\in\bR$. From the $G$-invariance of $\Phi$ we obtain that $G(\tilde{u}_0)\times\bR\subset (\nabla\Phi)^{-1}(0)$. We call the elements of the family $G(\tilde{u}_0)\times\bR$ the trivial solutions of the system \eqref{eq:system}. We are going to study global bifurcations of nontrivial solutions from this family.

We start with the necessary condition. Let us denote by $\Lambda$ the set of all $\lambda \in\bR$ such that the orbit $G(\tilde{u}_0)\times\{\lambda\}$ is degenerate, i.e. such that $\dim \ker \nabla^2_u\Phi(\tilde{u}_0, \lambda) >\dim(G(\tilde{u}_0) \times \{\lambda\}).$

\begin{Lemma}\label{lem:nec}
If a bifurcation of solutions of \eqref{eq:system} occurs from the orbit $G(\tilde{u}_0) \times \{\lambda_0\}$, then $\lambda_0 \in \Lambda.$ Moreover,
$$\Lambda=\bigcup_{\alpha_j \in \sigma(A) \setminus\{0\}} \bigcup_{\beta_k \in \sigma(-\Delta;\cU)} \left\{\frac{\beta_k}{\alpha_j}\right\}.$$
\end{Lemma}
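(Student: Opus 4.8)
The plan is to prove the two assertions of Lemma \ref{lem:nec} separately, handling the necessary condition first and then computing the degeneracy set $\Lambda$ explicitly.

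For the necessary condition, I would argue by contraposition. Suppose $\lambda_0\notin\Lambda$, so the orbit $G(\tilde u_0)\times\{\lambda_0\}$ is non-degenerate. Then $\lambda_0$ is an isolated point of $\Lambda$ (since, as the explicit description will show, $\Lambda$ is discrete), and there is $\varepsilon>0$ so that $G(\tilde u_0)$ is non-degenerate for every $\lambda\in[\lambda_0-\varepsilon,\lambda_0+\varepsilon]$. By the implicit function theorem applied transversally to the orbit (equivalently, by the standard fact that a non-degenerate critical orbit is isolated among zeros and persists under small perturbation of $\lambda$), there is a neighbourhood of $(\tilde u_0,\lambda_0)$ in $\bH\times\bR$ in which the only solutions of $\nabla_u\Phi(u,\lambda)=0$ lie on the trivial family $G(\tilde u_0)\times\bR$. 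This means no connected component of nontrivial solutions can accumulate at $(\tilde u_0,\lambda_0)$, so bifurcation cannot occur from $G(\tilde u_0)\times\{\lambda_0\}$. Hence bifurcation forces $\lambda_0\in\Lambda$.

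For the explicit formula, I would unwind the definition of $\Lambda$. Since $\nabla^2_u\Phi(\tilde u_0,\lambda)=Id-L_{\lambda A}$ by Lemma \ref{lem:postacPhi} (the quadratic part of $\nabla_u\eta_0$ vanishes at $0$), degeneracy of the orbit means
\[
\dim\ker(Id-L_{\lambda A})>\dim G(\tilde u_0)=p_0.
\]
Corollary \ref{cor:spektrumLA} gives the spectrum of $Id-L_{\lambda A}$ as the numbers $\frac{\beta_k-\lambda\alpha_j}{1+\beta_k}$ over $\alpha_j\in\sigma(A)$ and $\beta_k\in\sigma(-\Delta;\cU)$. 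The zero eigenvalue is attained exactly when $\beta_k=\lambda\alpha_j$. The contribution coming from the eigenvalue $\alpha_j=0$ of $A$ together with $\beta_1=0$ always produces a kernel of dimension exactly $p_0=\dim G(\tilde u_0)$ (this is the tangent space to the orbit, present for every $\lambda$), so it never contributes extra degeneracy. Thus the orbit is degenerate precisely when there is some nonzero $\alpha_j\in\sigma(A)$ and some $\beta_k$ with $\beta_k-\lambda\alpha_j=0$, i.e. $\lambda=\beta_k/\alpha_j$, giving the claimed union. I would note that the case $\beta_k=0$ with $\alpha_j\neq 0$ yields $\lambda=0$, which is consistent with the formula.

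I expect the main obstacle to be the careful bookkeeping of the kernel dimension to justify that the always-present $p_0$-dimensional kernel (the orbit directions) is exactly accounted for and does not spuriously count toward degeneracy. Concretely, one must verify that for every $\lambda$ the eigenvalue $0$ of $A$ contributes a $p_0$-dimensional piece to $\ker(Id-L_{\lambda A})$ coming from $\beta_1=0$, matching $\dim G(\tilde u_0)$, and that any \emph{additional} kernel can only arise from a coincidence $\beta_k=\lambda\alpha_j$ with $\alpha_j\neq 0$. This uses the block structure \eqref{eq:matrixA} of $A$ and the decomposition $\bH=\cH_0\oplus\cH_1\oplus\cH_2$ from Lemma \ref{lem:operatorLlambdaA}: the $\cH_0$ block corresponds to the zero eigenvalues of $A$ and its kernel is governed solely by $\beta_k=0$, while the genuinely $\lambda$-dependent crossings live in the $\cH_1\oplus\cH_2$ blocks. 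Once this separation is made explicit, both the necessary condition (via non-degeneracy and persistence) and the formula for $\Lambda$ follow directly.
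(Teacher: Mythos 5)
Your argument is correct and is essentially the one the paper intends: the paper's proof is only a pointer to Lemma 3.1 of \cite{GolKluSte}, which proceeds exactly as you do, ruling out local accumulation of nontrivial solutions at a non-degenerate orbit via the implicit function theorem on a normal slice, and then reading off the degeneracy set from the spectrum of $Id-L_{\lambda A}$ in Corollary \ref{cor:spektrumLA}. Your bookkeeping of the ever-present $p_0$-dimensional kernel (the orbit directions, coming from $\alpha_j=0$ and $\beta_1=0$, using that $\bV_{-\Delta}(0)$ is one-dimensional and that (a5) forces $B(u_0)$ and $C(u_0)$ to be invertible) is precisely the point that makes the formula for $\Lambda$ work.
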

The proof of this lemma is similar to the proof of Lemma 3.1 of \cite{GolKluSte}.

Denote by $\mu_M(a)$ the multiplicity of the eigenvalue $a$ of a matrix $M$. Recall that $B(u_0)$ is given by $B(u_0)=A_{|\bW^H}$, see \eqref{eq:matrixA}.

\begin{Theorem}\label{thm:globellip}
Consider the system \eqref{eq:system} with the potential $F$ and $u_0$ satisfying the assumptions (a1)--(a5).
Fix $\lambda_{0} \in \Lambda$ and assume that $\sigma(\lambda_0 B(u_0))\cap \sigma(-\Delta; \cU)=\{b_{j_1},\ldots, b_{j_s}\}$. If
\begin{equation}\label{eq:zalthm}
\sum_{i=1}^s \mu_{B(u_0)}(b_{j_i}) \dim \bV_{-\Delta}(b_{j_i}) \text{ is odd,}
\end{equation}
then a global bifurcation of solutions of \eqref{eq:system} occurs from the orbit $G(\tilde{u}_0) \times \{\lambda_0\}$.
\end{Theorem}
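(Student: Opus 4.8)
The plan is to deduce the statement from the abstract global bifurcation result, Theorem~\ref{thm:globbifV}, applied to the functional $\Phi$ of \eqref{eq:Phi} at the orbit $G(\tilde u_0)$, recalling that weak solutions of \eqref{eq:system} correspond to critical points of $\Phi$ in $u$. First I would cast the problem into the framework of Subsection~\ref{subsec:glob}. By Lemma~\ref{lem:postacPhi} we may write $\nabla_u\Phi(u,\lambda)=u-\nabla_u\zeta(u,\lambda)$ with $\nabla_u\zeta(u,\lambda)=\tilde u_0+L_{\lambda A}(u-\tilde u_0)+\nabla_u\eta_0(u-\tilde u_0,\lambda)$; since $L_{\lambda A}$ and $\nabla_u\eta_0$ are completely continuous and $G$-equivariant and adding the constant $\tilde u_0$ is a compact operation, $\nabla_u\zeta$ is a completely continuous $G$-equivariant field whose trivial zeros form $G(\tilde u_0)\times\bR$. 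The choice $\lambda_0\in\Lambda$ makes the orbit degenerate; by Lemma~\ref{lem:nec} the set $\Lambda=\bigcup_{\alpha_j\in\sigma(A)\setminus\{0\}}\bigcup_{\beta_k}\{\beta_k/\alpha_j\}$ has no finite accumulation point, because the $\beta_k$ tend to $+\infty$ while $\sigma(A)$ is finite. Hence $\lambda_0$ is isolated in $\Lambda$, which furnishes the required $\varepsilon>0$, and the orbit is non-degenerate (thus isolated among solutions) at $\lambda_0\pm\varepsilon$, so a $G$-invariant isolating $\Omega$ with $(\nabla_u\Phi(\cdot,\lambda_0\pm\varepsilon))^{-1}(0)\cap cl(\Omega)=G(\tilde u_0)$ exists.

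Next I would identify the negative-eigenspace data. Since $\nabla^2_u\eta_0(0,\lambda)=0$, the linearization at $\tilde u_0$ equals $Id-L_{\lambda A}$, which by Lemma~\ref{lem:operatorLlambdaA} (equivalently Corollary~\ref{cor:spektrumLA}) is diagonal on the slots indexed by an $A$-eigenvector $f_j$ (with $A$-eigenvalue $\alpha_j$) and a Laplacian eigenvalue $\beta_k$, the corresponding eigenvalue being $(\beta_k-\lambda\alpha_j)/(1+\beta_k)$, whose sign is that of $\beta_k-\lambda\alpha_j$. As a compact perturbation of the identity this operator has finitely many negative eigenvalues, so $\cW(\lambda)$ is finite-dimensional and changes only when $\lambda$ passes a value $\beta_k/\alpha_j$; at $\lambda_0$ precisely the slots with $\beta_k=\lambda_0\alpha_j$ cross zero. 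Hypothesis \eqref{eq:zalthm} forces $\lambda_0\neq0$: at $\lambda_0=0$ the only value in $\sigma(\lambda_0 B(u_0))\cap\sigma(-\Delta;\cU)$ is $0$, which is not an eigenvalue of the invertible matrix $B(u_0)=A_{|\bW^H}$ (invertibility follows from \eqref{eq:matrixA} together with the non-degeneracy assumption (a5)), so the sum vanishes. For $\lambda_0\neq0$, since $\beta_k\geq0$, the equation $\beta_k=\lambda_0\alpha_j$ selects $\alpha_j$ of the single sign of $\lambda_0$; every crossing eigenvalue is then monotone in $\lambda$ and changes sign in the same direction. Consequently $\cW(\lambda_0+\varepsilon)$ and $\cW(\lambda_0-\varepsilon)$ are comparable and differ by the $G$-invariant subspace $\cV(\lambda_0)$ equal to the sum of the crossing eigenspaces, so the structural hypothesis of Theorem~\ref{thm:globbifV} holds.

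It then remains to compute $\dim\cV(\lambda_0)^H$ with $H=G_{u_0}$. The key observation is that the $G$-action $(g,u)(x)\mapsto g\,u(x)$ is trivial on the spatial variable and affects only the $\bR^p$-valued fibre, so each eigenspace splits as $(\text{an }A\text{-eigenspace})\otimes\bV_{-\Delta}(\beta_k)$ and its $H$-fixed part is $(\text{that }A\text{-eigenspace})^H\otimes\bV_{-\Delta}(\beta_k)$. Writing $\bR^p=T_{u_0}G(u_0)\oplus\bW^H\oplus(\bW^H)^{\bot}$ with $A$ in the block form \eqref{eq:matrixA}, the slots on $(\bW^H)^{\bot}$ contribute nothing $H$-fixed because $((\bW^H)^{\bot})^H=\{0\}$, while the slots on $T_{u_0}G(u_0)$ carry $\alpha_j=0$ and never cross; thus the only $H$-fixed crossings come from $\bW^H$, where $A$ acts as $B(u_0)$. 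Grouping the crossing eigenspaces by $\beta_k$ and taking $H$-fixed parts, the contribution at each matching $\beta_k\in\sigma(\lambda_0 B(u_0))\cap\sigma(-\Delta;\cU)$ is the multiplicity of the corresponding eigenvalue of $B(u_0)$ times $\dim\bV_{-\Delta}(\beta_k)$, which sums exactly to the left-hand side of \eqref{eq:zalthm}. Hence $\dim\cV(\lambda_0)^H$ is odd, and Theorem~\ref{thm:globbifV} yields the global bifurcation from $G(\tilde u_0)\times\{\lambda_0\}$.

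The hard part will be the equivariant bookkeeping in the last step: one must argue precisely that $\bW^H$, the trivial $H$-isotypical summand of $\bW$, is the sole source of $H$-fixed vectors among the crossing directions, and that the splitting $\bH\cong\bR^p\otimes H^1(\cU)$ carries the $H$-action entirely on the first factor so that the Laplacian eigenspaces are $H$-trivial. A second, more delicate point is the comparability of $\cW(\lambda_0\pm\varepsilon)$, where the sign restriction forced by $\beta_k\geq0$ and $\lambda_0\neq0$ is essential; without it the two negative eigenspaces need not be nested, and one would instead have to compare the $(H)$-coordinates of the two degrees directly via Theorems~\ref{thm:RabAlt} and~\ref{thm:main} (using Theorem~\ref{thm:degfiniterelation} to read each degree as $\chi_G(G^+\wedge_H S^{\cW(\lambda_0\pm\varepsilon)})$ and noting that its $(H)$-coordinate is $(-1)^{\dim\cW(\lambda_0\pm\varepsilon)^H}$).
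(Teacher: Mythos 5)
Your argument for $\lambda_0\neq 0$ is essentially the paper's proof: the same reduction to Theorem~\ref{thm:globbifV}, the same spectral formula from Corollary~\ref{cor:spektrumLA} for $\cW(\lambda)$ and for the crossing space $\cV(\lambda_0)$, and the same $H$-fixed-point bookkeeping via the block decomposition \eqref{eq:matrixA} (your explicit observation that the tangent block never crosses and that $((\bW^H)^{\bot})^H=\{0\}$ is, if anything, slightly more careful than the paper's). The gap is your dismissal of $\lambda_0=0$. You rule it out by reading $\mu_{B(u_0)}(0)=0$ for the invertible matrix $B(u_0)$; but that literal reading of $\mu_{B(u_0)}(b_{j_i})$, applied consistently, would also annihilate the sum for generic $\lambda_0\neq 0$, since $b_{j_i}=\lambda_0 b_j\in\sigma(\lambda_0 B(u_0))$ is then usually not an eigenvalue of $B(u_0)$ either (compare Theorem~\ref{thm:exsop}, where the multiplicity is taken to be $1$ even though $\lambda_0 b_0\neq b_0$). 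Under the reading you yourself use in the $\lambda_0\neq0$ computation --- the multiplicity of the \emph{matching} eigenvalue of $B(u_0)$, as in \eqref{eq:vlambdaH} --- the sum at $\lambda_0=0$ equals $\dim\bW^H\cdot\dim\bV_{-\Delta}(0)=\dim\bW^H$, which can perfectly well be odd, so the case is not vacuous and the paper treats it.

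Moreover, at $\lambda_0=0$ your main mechanism genuinely breaks: the crossing slots are $(\beta_1,\alpha_j)=(0,\alpha_j)$ with $\alpha_j$ of either sign, the eigenvalues $-\lambda\alpha_j$ cross in opposite directions, and $\cW(\varepsilon)$, $\cW(-\varepsilon)$ are not nested when $B(u_0)$ is indefinite, so Theorem~\ref{thm:globbifV} does not apply. The paper instead invokes Theorem~\ref{thm:RabAlt} directly, using that $\bV_{-\Delta}(0)$ is the one-dimensional trivial $G$-representation: one gets $\dim\cW(\pm\varepsilon)^H=m_{\pm}$, the number of positive (resp.\ negative) eigenvalues of $B(u_0)$ counted with multiplicity, and $m_++m_-=\dim\bW^H$ odd forces the $(H)$-coordinates $(-1)^{m_{\pm}}$ of the two Euler characteristics, hence the two degrees, to differ. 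This is exactly the fallback you sketch in your closing paragraph; you only need to recognize that $\lambda_0=0$ is the case that requires it, rather than excluding that case.
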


\begin{proof}
To prove the existence of the global bifurcation in the case $\lambda_0 \neq 0$ we use Theorem \ref{thm:globbifV}. From the above considerations the assumptions  on the space $\bH$ and the functional $\Phi$ of this theorem are satisfied.
Moreover from Lemma \ref{lem:nec} it follows that we can choose $\varepsilon>0$ such that $\Lambda\cap[\lambda_0-\varepsilon,\lambda_0+\varepsilon]=\{\lambda_0\}$.

As in the previous section, denote by $\cW(\lambda)$ the direct sum of the eigenspaces of $\nabla^2_u\Phi(u_0,\lambda)=Id-L_{\lambda A}$ corresponding to the negative eigenvalues and note that, by Corollary \ref{cor:spektrumLA}, it can be represented by the formula
$$ \cW(\lambda)=
\bigoplus_{\alpha_j\in\sigma( A)} \ \bigoplus_{\substack{\beta_k \in \sigma(-\Delta;\cU)\\\beta_k<\lambda  \alpha_j}} \bV_{-\Delta}(\beta_k)^{\mu_{ A}(\alpha_{j})}.$$

In the above formula we understand $\bV_{-\Delta}(\beta_k)^{\mu_{A}(\alpha)}$ as $\mathrm{span}\{
h\cdot f\colon h\in \bV_{-\Delta}(\beta_{k}), f\in \bV_{A}(\alpha)\}$, see also \cite{GolKluSte}. However, this space is isomorphic with the direct sum of $\mu_A(\alpha)$ copies of $\bV_{-\Delta}(\beta_k)$.
Since in our computations the important thing is the dimension of the spaces, we identify $\bV_{-\Delta}(\beta_k)^{\mu_{A}(\alpha)}= \bigoplus_{i=1}^{\mu_{A}(\alpha)}\bV_{-\Delta}(\beta_k)$.

For $\lambda\in\bR$ denote by $\cV(\lambda)$ the $G$-representation being the eigenspace  of $Id-L_{\lambda A}$ corresponding to the zero eigenvalue. Note that, from Lemma \ref{lem:operatorLlambdaA} and Corollary \ref{cor:spektrumLA}, we have
$$\cV(\lambda)=\bigoplus_{\alpha_j\in\sigma(A)} \ \bigoplus_{\substack{\beta_k \in \sigma(-\Delta;\cU)\\ \beta_k=\lambda  \alpha_j}} \bV_{-\Delta}(\beta_k)^{\mu_{ A}(\alpha_{j})}.$$
Additionally by \eqref{eq:matrixA} and by the definition of the action of $G$ on $\bH$,
\begin{equation}\label{eq:vlambdaH}
\cV(\lambda)^H=\bigoplus_{b_j\in\sigma(B(u_0))} \ \bigoplus_{\substack{\beta_k \in \sigma(-\Delta;\cU)\\ \beta_k=\lambda b_j}} \bV_{-\Delta}(\beta_k)^{\mu_{B(u_0)}(b_{j})}.
\end{equation}

Note that if $\lambda_0>0$ then $\cW(\lambda_0+\varepsilon)=\cW(\lambda_0) \oplus \cV(\lambda_0)$, $\cW(\lambda_0-\varepsilon)=\cW(\lambda_0)$. 
By the assumption \eqref{eq:zalthm}, from \eqref{eq:vlambdaH} it follows that $\dim \cV(\lambda_0)^H$ is odd.
Applying Theorem \ref{thm:globbifV}, we obtain the assertion.
In the case $\lambda_0\leq 0$ the proof is analogous.
To prove the assertion for $\lambda_0=0$ we use the fact that $\bV_{-\Delta}(0)$ is a one dimensional trivial $G$-representation and we apply Theorem \ref{thm:RabAlt}.
\end{proof}

\section{Examples}
In this section we discuss a few examples in order to illustrate the abstract results proved in the previous section. 

\nt \it{Example 1.} \rm We start with the system \eqref{eq:system} on a rectangle $\cU=[0,l_1]\times[0,l_2]$. The set $\cU$ has a piecewise $C^1$ boundary, hence the unit outward normal vector $\nu(x)$ is well defined for almost all $x\in\partial\cU$. 
It is known that the eigenvalues of the Neumann Laplacian are of the form
\begin{equation*}
\beta_{km}=\pi^2\left(\frac{k^2}{l_1^2}+\frac{m^2}{l_2^2}\right)
\end{equation*}
with the corresponding eigenfunction
\begin{equation*}
v_{km}(x,y)=\cos\left(\frac{k \pi}{l_1}x\right)\cos\left(\frac{m \pi}{l_2}y\right)
\end{equation*}
for $k,m\in \bN\cup\{0\}$, see chapter IV\S2 of \cite{Michlin}.

Note that if $\beta_{k_1m_1}=\beta_{k_2 m_2}$, then 
\[
\frac{k_1^2}{l_1^2}+\frac{m_1^2}{l_2^2}=\frac{k_2^2}{l_1^2}+\frac{m_2^2}{l_2^2}
\]
and consequently
\[
l_1^2(m_1^2-m_2^2)+l_2^2(k_1^2-k_2^2)=0.
\]
If we assume additionally that $l_1^2\neq ql_2^2$ for every $q\in\bQ$, then $k_1=k_2$ and $m_1=m_2$.

We have thus proved the following lemma:

\begin{Lemma}
If $l_1^2\neq ql_2^2$ for every $q\in\bQ$, then $\dim \bV_{-\Delta}(\beta)=1$ for every $\beta\in\sigma(-\Delta, \cU)$.
\end{Lemma}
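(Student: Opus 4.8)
The plan is to read the result off directly from the explicit description of the Neumann spectrum on the rectangle, combined with the elementary number-theoretic observation recorded just above the statement. The starting point is the fact, obtained by separation of variables, that the family $\{v_{km}\colon k,m\in\bN\cup\{0\}\}$ is a complete orthogonal system in $L^2(\cU)$ consisting of eigenfunctions of the Neumann Laplacian, with $-\Delta v_{km}=\beta_{km}v_{km}$. Consequently, for any $\beta\in\sigma(-\Delta;\cU)$ the eigenspace $\bV_{-\Delta}(\beta)$ is spanned exactly by those $v_{km}$ for which $\beta_{km}=\beta$, and distinct index pairs $(k,m)$ yield mutually orthogonal, hence linearly independent, eigenfunctions. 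Therefore $\dim\bV_{-\Delta}(\beta)$ equals the number of pairs $(k,m)\in(\bN\cup\{0\})^2$ with $\beta_{km}=\beta$, and it suffices to show that such a pair is unique.

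Next I would invoke the computation displayed before the statement: the equality $\beta_{k_1m_1}=\beta_{k_2m_2}$ is equivalent to $l_1^2(m_1^2-m_2^2)+l_2^2(k_1^2-k_2^2)=0$. The key step is to use the hypothesis $l_1^2\neq ql_2^2$ for every $q\in\bQ$ to exclude the case $m_1^2\neq m_2^2$: if $m_1^2\neq m_2^2$, one may solve for $l_1^2/l_2^2=(k_2^2-k_1^2)/(m_1^2-m_2^2)$, which is a rational number, contradicting the hypothesis. Hence $m_1^2=m_2^2$, and substituting this back into the displayed identity forces $l_2^2(k_1^2-k_2^2)=0$, so that $k_1^2=k_2^2$ as well.

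Finally, since $k_1,k_2,m_1,m_2$ are nonnegative integers, the equalities $m_1^2=m_2^2$ and $k_1^2=k_2^2$ yield $m_1=m_2$ and $k_1=k_2$. Thus each eigenvalue is attained by a unique index pair, so every eigenspace is one-dimensional, which is the assertion. The argument is essentially routine: the only points requiring care are the completeness of the cosine basis, which guarantees that no eigenfunction outside the listed family contributes to $\bV_{-\Delta}(\beta)$, and the passage from the equality of squares to the equality of the indices themselves, which relies on these indices being nonnegative. Neither presents a genuine obstacle, since the decisive rationality observation is already supplied in the preceding discussion.
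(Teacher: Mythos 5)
Your proposal is correct and follows essentially the same route as the paper: the paper's proof is precisely the displayed computation preceding the lemma, namely that $\beta_{k_1m_1}=\beta_{k_2m_2}$ forces $l_1^2(m_1^2-m_2^2)+l_2^2(k_1^2-k_2^2)=0$, whence the irrationality hypothesis gives $k_1=k_2$ and $m_1=m_2$. You merely make explicit two points the paper leaves implicit (completeness of the cosine system and the passage from equal squares to equal nonnegative indices), both of which are routine.
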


Therefore, knowing that the dimensions of all the eigenspaces are equal one, we verify the assumption \eqref{eq:zalthm} of Theorem \ref{thm:globellip} by counting the multiplicities of eigenvalues of the matrix $B(u_0)$. More precisely, we have the following:

\begin{Theorem}
Consider the system \eqref{eq:system} on $\cU=[0,l_1]\times[0,l_2]$ with the potential $F$ and $u_0$ satisfying the assumptions (a1)--(a5).
Assume that $l_1^2\neq ql_2^2$ for every $q\in\bQ$.
Fix $\lambda_{0} \in \Lambda$ and assume that $\sigma(\lambda_0 B(u_0))\cap \sigma(-\Delta; \cU)=\{b_{j_1},\ldots, b_{j_s}\}$.
If
$\sum_{i=1}^s \mu_{B(u_0)}(b_{j_i})$ is odd,
then a global bifurcation of solutions of \eqref{eq:system} occurs from the orbit $G(\tilde{u}_0) \times \{\lambda_0\}$.
\end{Theorem}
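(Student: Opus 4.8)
The plan is to obtain this statement as a direct specialization of Theorem \ref{thm:globellip}, the content of the argument being entirely the one-dimensionality of the Neumann eigenspaces on the rectangle. First I would observe that $\cU=[0,l_1]\times[0,l_2]$ has a piecewise $C^1$ boundary, so that the standing assumptions (a1)--(a5) together with the structural hypotheses of Theorem \ref{thm:globellip} apply verbatim. In particular, fixing $\lambda_0\in\Lambda$ and writing $\sigma(\lambda_0 B(u_0))\cap\sigma(-\Delta;\cU)=\{b_{j_1},\ldots,b_{j_s}\}$ is meaningful here exactly as in the general setting.

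Next I would invoke the preceding Lemma: since the hypothesis $l_1^2\neq q l_2^2$ holds for every $q\in\bQ$, every eigenspace of the Neumann Laplacian on $\cU$ is one-dimensional, that is, $\dim\bV_{-\Delta}(\beta)=1$ for all $\beta\in\sigma(-\Delta;\cU)$. Substituting this into the general bifurcation condition \eqref{eq:zalthm}, each factor $\dim\bV_{-\Delta}(b_{j_i})$ equals $1$, so that
$$\sum_{i=1}^s \mu_{B(u_0)}(b_{j_i})\,\dim\bV_{-\Delta}(b_{j_i})=\sum_{i=1}^s \mu_{B(u_0)}(b_{j_i}).$$
Consequently, the hypothesis that $\sum_{i=1}^s \mu_{B(u_0)}(b_{j_i})$ is odd is precisely condition \eqref{eq:zalthm} in the present situation.

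Finally I would apply Theorem \ref{thm:globellip} to conclude that a global bifurcation of solutions of \eqref{eq:system} occurs from the orbit $G(\tilde{u}_0)\times\{\lambda_0\}$; note that the case $\lambda_0=0$ requires no separate treatment, as it is already subsumed by Theorem \ref{thm:globellip}. I do not expect any genuine obstacle here: the whole argument is the reduction step above, and the only point to verify carefully is that the one-dimensionality Lemma applies uniformly to every $\beta$ entering \eqref{eq:zalthm}. This is indeed the case, since the arithmetic condition on $l_1$ and $l_2$ is independent of the chosen $\lambda_0$ and forces $\dim\bV_{-\Delta}(\beta)=1$ for all eigenvalues $\beta$ at once.
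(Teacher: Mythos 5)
Your proposal is correct and coincides with the paper's own (implicit) argument: the paper likewise deduces the theorem from Theorem \ref{thm:globellip} by using the preceding lemma to conclude that $\dim\bV_{-\Delta}(\beta)=1$ for every Neumann eigenvalue $\beta$, so that condition \eqref{eq:zalthm} collapses to the oddness of $\sum_{i=1}^s \mu_{B(u_0)}(b_{j_i})$. Nothing is missing.
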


\begin{Remark}
In the above theorem we have used the property that the eigenspaces of the Laplacian on a given domain are one-dimensional. However, this property holds not only in this case. It is known, that this property is generic, see for example \cite{Albert1975}, \cite{Albert1978} and \cite{Uhlenbeck}. Therefore, considering an arbitrary domain $\cU$, the assumption \eqref{eq:zalthm} of Theorem \ref{thm:globellip} can be usually simplified as in the above theorem, i.e. to counting the multiplicities of eigenvalues of the matrix $B(u_0)$.

On the other hand, there are known domains $\cU$ such that the eigenspaces of the Laplacian are of greater dimensions. This occurs for example in the case of symmetric domains.
Such eigenspaces are described for instance in  \cite{GolKluSte} (for a ball), \cite{Shimakura} (for a sphere), and in \cite{Bang}, \cite{RybShiSte} (for a geodesic ball). The case of a more general symmetric space can be found in \cite{Gurarie}. Note that in some cases the explicit formulae for the dimensions of eigenspaces are given. This allows to verify the assumption \eqref{eq:zalthm} of Theorem \ref{thm:globellip}.
\end{Remark}

\nt \it{Example 2.} \rm Consider now the system \eqref{eq:system} with an arbitrary domain $\cU$ and assume that $G=SO(p)$, i.e. $\bR^p$ is an orthogonal $SO(p)$-representation. Suppose that $u_0=(0,\ldots,0,\hat{u}_0)$ for some $\hat{u}_0\in\bR\setminus\{0\}$ and the assumptions (a1)--(a5) are satisfied. It is easy to see that $H=SO(p)_{u_0}=SO(p-1)\times\{1\}$ and $G/H$ is $G$-homeomorphic with a $(p-1)$-dimensional sphere. Moreover, $T_{u_0} G (u_0)$ is a $(p-1)$-dimensional plane, $\bW=(T_{u_0} G (u_0))^{\bot}=\{0\}^{p-1}\times\bR$ and $\bW^H=\bW$.

Hence \begin{equation*}
\left.\begin{array}{rccc}
& T_{u_0} G(u_0)& &T_{u_0} G(u_0) \\
A\colon &\oplus& \to &\oplus\\
&\bW^H& &\bW^H\
\end{array}\right.
\end{equation*}
has the following form
\begin{equation*}
A=\left[\begin{array}{cc}
0&0\\
0&b_0
\end{array}\right].
\end{equation*}
In particular, $\sigma(\lambda B(u_0))=\{\lambda b_0\}$ and $\Lambda= \bigcup_{\beta_k \in \sigma(-\Delta;\cU)} \left\{\frac{\beta_k}{b_0}\right\}$.

\begin{Theorem}\label{thm:exsop}
Consider the system \eqref{eq:system} with the potential $F$ and $u_0$ satisfying the assumptions (a1)--(a5).
Assume that $u_0=(0,\ldots,0,\hat{u}_0)$ for some $\hat{u}_0\in\bR\setminus\{0\}$.
Fix $\lambda_{0}=\frac{\beta_k}{b_0} \in \Lambda$. If
$ \dim \bV_{-\Delta}(\lambda_0 b_{0})$ is odd,
then a global bifurcation of solutions of \eqref{eq:system} occurs from the orbit $G(\tilde{u}_0) \times \{\lambda_0\}$.
\end{Theorem}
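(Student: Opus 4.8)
The plan is to derive Theorem \ref{thm:exsop} as a direct specialization of the general Theorem \ref{thm:globellip} to the concrete situation $G=SO(p)$ with $u_0=(0,\ldots,0,\hat u_0)$. The key structural observation, already established in the text preceding the statement, is that in this setting the matrix $B(u_0)=A_{|\bW^H}$ is the $1\times 1$ matrix $[b_0]$, so its spectrum is the single eigenvalue $b_0$ with multiplicity one, and $\bW^H=\bW=\{0\}^{p-1}\times\bR$. Thus the hypothesis of Theorem \ref{thm:globellip} concerning $\sigma(\lambda_0 B(u_0))\cap\sigma(-\Delta;\cU)$ simplifies dramatically: since $\sigma(\lambda_0 B(u_0))=\{\lambda_0 b_0\}$, this intersection is nonempty precisely when $\lambda_0 b_0\in\sigma(-\Delta;\cU)$, which is exactly the condition built into $\lambda_0=\frac{\beta_k}{b_0}\in\Lambda$.

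The main step is then to evaluate the parity condition \eqref{eq:zalthm}. With only the single eigenvalue $b_0$ of $B(u_0)$ in play, the index set $\{b_{j_1},\ldots,b_{j_s}\}$ reduces to the single element $b_0$ (that is, $s=1$), the multiplicity factor is $\mu_{B(u_0)}(b_0)=1$, and the sum in \eqref{eq:zalthm} collapses to the single term $\dim\bV_{-\Delta}(\lambda_0 b_0)$. Consequently the odd-dimensionality hypothesis \eqref{eq:zalthm} of Theorem \ref{thm:globellip} holds if and only if $\dim\bV_{-\Delta}(\lambda_0 b_0)$ is odd, which is precisely the hypothesis of the present theorem. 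First I would record the computation of $H=SO(p-1)\times\{1\}$, the orbit $G(u_0)\cong S^{p-1}$, and the reduced form of $A$ (all already assembled above), and then simply substitute these into formula \eqref{eq:vlambdaH} for $\cV(\lambda_0)^H$ to confirm that $\dim\cV(\lambda_0)^H=\dim\bV_{-\Delta}(\lambda_0 b_0)$.

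With the hypotheses of Theorem \ref{thm:globellip} thereby verified, the conclusion that a global bifurcation of solutions of \eqref{eq:system} occurs from $G(\tilde u_0)\times\{\lambda_0\}$ follows immediately by invoking that theorem. I do not expect a genuine obstacle here, since the argument is an application rather than a new proof; the only point requiring a small amount of care is the bookkeeping verifying that $B(u_0)$ is genuinely scalar (one-dimensional) under the stated choice of $u_0$, so that the single term in the sum carries the full parity information. Because $\bW^H$ is one-dimensional and $H=SO(p-1)$ acts trivially on the last coordinate while acting transitively on the sphere in the first $p-1$ coordinates, the fixed-point space and the eigenvalue structure of $A$ are forced into the displayed block form, and the reduction is rigorous. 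Thus the theorem is a clean corollary of Theorem \ref{thm:globellip}, and the proof is short.
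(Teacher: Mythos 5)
Your proposal is correct and follows exactly the route the paper intends: the theorem is stated as an immediate specialization of Theorem \ref{thm:globellip}, using the computations already assembled in Example 2 (namely $H=SO(p-1)\times\{1\}$, $\bW^H=\bW=\{0\}^{p-1}\times\bR$, and $B(u_0)=[b_0]$ a $1\times1$ matrix), so that $s=1$, $\mu_{B(u_0)}(b_0)=1$, and condition \eqref{eq:zalthm} collapses to the oddness of $\dim\bV_{-\Delta}(\lambda_0 b_0)$. Nothing further is needed.
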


\begin{Remark}
It is easy to see that if we replace $u_0=(0,\ldots,0,\hat{u}_0)$ by an arbitrary $v_0\in\bR^p\setminus\{0\}$, then the assertion of the above theorem holds.
\end{Remark}

\end{document}